\theoremstyle{definition}
\newtheorem{thm}{Theorem}[section]
\newtheorem{defn}[thm]{Definition}
\newtheorem{prop}[thm]{Proposition}
\newtheorem{cor}[thm]{Corollary}
\newcommand{\goth}{\mathfrak}
\newcommand{\Gal}[1]{\mbox{Gal}(#1)}
\newcommand{\perm}[1]{\mbox{Perm}(#1)}
\def \OE {{\goth O}_{E}}
\def \OL {{\goth O}_{L}}
\def \OK {{\goth O}_{K}}
\def \p  {{\goth p}}
\def \P {{\goth P}}
\def \OEp {{\goth O}_{E,{\p}}}
\def \OEP {{\goth O}_{E,{\P}}}
\def \OLp {{\goth O}_{L,{\p}}}
\def \OKp {{\goth O}_{K,{\p}}}
\def \A {{\goth A}}
\def \AH {{\goth A}_{H}}
\def \AHp {\A_{H,\p}}
\def \M {{\goth M}}
\def \Mp {{\goth M}_{\p}}
\def \a {{\alpha}}
\def \e0 {e_{0}}
\title{Towards a Generalisation of Noether's Theorem to Nonclassical Hopf-Galois Structures}
\author{Paul J. Truman}
\begin{document}

\maketitle

\begin{abstract}
We study the nonclassical Hopf-Galois module structure of rings of algebraic integers in some extensions of $ p $-adic fields and number fields which are at most tamely ramified. We show that if $ L/K $ is an unramified extension of $ p $-adic fields which is $ H $-Galois for some Hopf algebra $ H $ then $ \OL $ is free over its associated order $  \AH $ in $ H $. If $ H $ is commutative, we show that this conclusion remains valid in ramified extensions of $ p $-adic fields if $ p $ does not divide the degree of the extension. By combining these results we prove a generalisation of Noether's theorem to nonclassical Hopf-Galois structures on domestic extensions of number fields. 
\end{abstract}

\section{Introduction}
Let $ L/K $ be a finite Galois extension of number fields or $ p $-adic fields (for some prime number $ p $) with group $ G $, and let $ \OL $ and $ \OK $ be the rings of algebraic integers or valuation rings of $ L $ and $ K $ respectively. By the normal basis theorem, $ L $ is a free module of rank one over the group algebra $ K[G] $. The ring of algebraic integers (or valuation ring) $ \OL $ is likewise a module over the integral group ring $ \OK[G] $, and Noether's theorem identifies when an analogous result holds at integral level: $ \OL $ is free over $ \OK[G] $ (for $ p $-adic fields) or {\em locally free} over $ \OK[G] $ (for number fields) if and only if $ L/K $ is at most tamely ramified \cite[Theorem 3]{Fro}. By locally free we mean that for each prime $ \p $ of $ \OK $ the completed ring of integers $ \OLp = \OKp \otimes_{\OK} \OL $ is free over the completed integral group ring $ \OKp[G] = \OKp \otimes_{\OK} \OK[G] $. An approach to studying wildly ramified extensions is to replace the integral group ring with a larger order in $ K[G] $, called the associated order:
\[ \A_{K[G]} = \{ \a \in K[G] \mid a \cdot x \in \OL \mbox{ for all } x \in \OL \}. \]
By construction $ \A_{K[G]} $ is the largest order in $ K[G] $ for which $ \OL $ is a module, and it is possible that $ \OL $ will be a free (or locally free) $ \A_{K[G]} $-module. In the $ p $-adic case Childs \cite{Ch87} provided a sufficient condition for this to occur by exploiting the fact that $ K[G] $ is a Hopf algebra - his theorem is that $ \OL $ is a free $ \A_{K[G]} $-module if the latter is a Hopf order in $ K[G] $. The action of the group algebra $ K[G] $ on a Galois extension $ L/K $ is a special case of the more general concept of a Hopf-Galois structure on a finite separable extension of fields. A given separable extension $ L/K $ may admit a number of Hopf-Galois structures, each consisting of a Hopf algebra $ H $ such that $ L $ is a $ H $-Galois extension of $ K $ (for the definition see the following section). If the extension is Galois then it admits at least one Hopf-Galois structure with Hopf algebra $ K[G] $, and we call this the classical structure. We call any other Hopf-Galois structures admitted by the extension nonclassical. A theorem of Greither and Pareigis reduces the enumeration of the Hopf-Galois structures admitted by a given extension to a group theoretic problem, and shows that the Hopf algebras all occur as ``twisted" forms of certain group algebras. To study the structure of $ \OL $ relative to the various Hopf-Galois structures admitted by the extension, we define within each Hopf algebra $ H $ an associated order:
\[ \AH = \{ h \in H \mid h \cdot x \in \OL \mbox{ for all } x \in \OL \}. \]
As with the group algebra $ K[G] $, for each Hopf algebra $ H $ which gives a Hopf-Galois structure on the extension, $ \AH $ is the largest order in $ H $ for which $ \OL $ is a module, and in fact $ \AH $ is the only order in $ H $ over which $ \OL $ can be free. Childs's theorem generalises to this context - if $ L/K $ is a finite $ H $-Galois extension of $ p $-adic fields and $ \AH $ is a Hopf order in $ H $ then $ \OL $ is a free $ \AH $-module. \\
\\
The use of nonclassical Hopf-Galois structures has proven to be fruitful in the study of wildly ramified extensions. For example, Byott \cite{By97a} has exhibited a class of wildly ramified Galois extensions $ L/K $ of $ p $-adic fields for which $ \OL $ is not free over $ \A_{K[G]} $, its associated order in the classical structure with Hopf algebra $ K[G] $, but is free over $ \AH $, its associated order in some Hopf algebra $ H $ giving a nonclassical structure on the extension. So from the point of view of describing $ \OL $, for these extensions the classical structure is not the ``correct" structure to use, and a nonclassical structure gives a more satisfactory description of the ring of algebraic integers. \\
\\
On the other hand, little is known about the nonclassical Hopf-Galois module structure of $ \OL $ when $ L/K $ is a tamely ramified extension. For Galois extensions, Noether's theorem states that in the classical structure with Hopf algebra $ K[G] $ we have $ \A_{K[G]} = \OK[G] $ and that $ \OL $ is free (for $ p $-adic fields) or locally free (for number fields) over $ \OK[G] $, and for number fields results such as the Hilbert-Speiser theorem describe the global structure of $ \OL $ over $ \OK[G] $ in certain cases \cite{HilSpe}. We might wonder whether analogous results hold for any nonclassical structures admitted by the extension. The purpose of this paper is to address the local question for certain classes of extensions which are at most tamely ramified. In sections 3 and 4 we prove the following two theorems concerning $ p $-adic fields: 
\begin{thm} \label{intro_theorem_unramified_local}
Let $ L/K $ be a finite unramified extension of $ p $-adic fields and let $ H $ be a Hopf algebra giving a Hopf-Galois structure on the extension. Then $ \OL $ is a free $ \AH $-module. 
\end{thm}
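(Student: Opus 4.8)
The plan is to prove that the associated order $\AH$ is a Hopf order in $H$ and then to quote the Hopf-order case of Childs's theorem recorded above. Since $L/K$ is unramified it is Galois, with cyclic group $G = \Gal{L/K}$, and its Galois closure is $L$ itself; hence by the theorem of Greither and Pareigis we may write $H = L[N]^{G} \subseteq L[N]$, where $N$ is a regular subgroup of $\perm{G}$ normalised by the image of the left translation map $\lambda \colon G \to \perm{G}$, with $G$ acting on $L$ through its Galois action and on $N$ by conjugation via $\lambda$. Inside $H$ I would consider the $\OK$-submodule $\goth{A}^{0} = (\OL[N])^{G}$, where $\OL[N]$ denotes the standard Hopf $\OL$-order in $L[N]$. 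The key point is that, $L/K$ being unramified, $\OL/\OK$ is itself a $G$-Galois extension of rings --- its discriminant is a unit, so that $\OL \otimes_{\OK} \OL \cong \mbox{Map}(G, \OL)$ canonically. Faithfully flat (Galois) descent along $\OK \to \OL$ then shows first that $\goth{A}^{0} \otimes_{\OK} \OL \cong \OL[N]$, so that $\goth{A}^{0}$ is an $\OK$-order in $H$; and since $\OL[N]$ is a Hopf order and $G$ acts on it by Hopf algebra automorphisms, the descended order $\goth{A}^{0}$ is in fact a Hopf $\OK$-order in $H$.

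Next I would show $\AH = \goth{A}^{0}$. Base-changing the $H$-action on $L$ along $\OK \to \OL$ identifies the action of $H \otimes_{K} L \cong L[N]$ on $L \otimes_{K} L \cong \mbox{Map}(G, L)$ as the one in which $N$ permutes coordinates and elements of $L$ act as scalars; from this it is immediate that $\OL[N]$ preserves the subring $\mbox{Map}(G, \OL) \cong \OL \otimes_{\OK} \OL$, and applying the multiplication map $\OL \otimes_{\OK} \OL \to \OL$ gives $\goth{A}^{0} \cdot \OL \subseteq \OL$, that is $\goth{A}^{0} \subseteq \AH$. For the reverse inclusion, if $h \in \AH$ then the operator induced by $h$ on $L \otimes_{K} L$ carries $\OL \otimes_{\OK} \OL = \mbox{Map}(G, \OL)$ into itself; evaluating on the coordinate idempotents of $\mbox{Map}(G, \OL)$ and using the regularity of $N$ forces every coefficient of $h$, viewed in $L[N]$, to lie in $\OL$, so $h \in \OL[N] \cap L[N]^{G} = \goth{A}^{0}$. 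Hence $\AH = \goth{A}^{0}$ is a Hopf order in $H$, and Childs's theorem yields that $\OL$ is free over $\AH$.

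I expect the decisive step to be the descent argument showing that $\goth{A}^{0} = (\OL[N])^{G}$ is genuinely a Hopf order and not merely an $\OK$-order: this is exactly where unramifiedness is used essentially, through the fact that $\OL/\OK$ is a Galois extension of rings, and it is precisely this that fails for a wildly ramified extension, where the analogous order need not be Hopf and $\OL$ need not be free over its associated order at all. By comparison, the explicit computation identifying $\AH$ with $(\OL[N])^{G}$ via the permutation action of $N$ on $\mbox{Map}(G, \OL)$ is routine, though one must keep careful track of which copy of $L$ in $L \otimes_{K} L$ is acted upon and which supplies the scalars.
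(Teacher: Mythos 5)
Your proposal is correct and follows essentially the same route as the paper: identify $ \AH $ with $ \OL[N]^{G} $, show that this is a Hopf order using the unramifiedness of $ L/K $, and conclude by Childs's theorem. The only difference is one of packaging: where you invoke Galois descent along the ring extension $ \OL / \OK $ (both to descend the Hopf structure from $ \OL[N] $ and to bring the inclusion $ \AH \otimes_{\OK} \OL \subseteq \OL[N] $ back down to $ \OK $), the paper uses Byott's fixed-point lemma $ (\OL X)^{G} \otimes_{\OK} \OL = \OL X $ together with an explicit basis computation in which the discriminant being a unit does the same work.
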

\begin{thm} \label{intro_theorem_maximal_local}
Let $ L/K $ be a finite (not necessarily Galois) extension of $ p $-adic fields and let  $ H $ be a commutative Hopf algebra giving a Hopf-Galois structure on the extension. Suppose that $ p \nmid [L:K] $. Then $ \OL $ is a free $ \AH $-module. 
\end{thm}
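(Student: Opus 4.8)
The plan is to reduce the problem to a statement about a single Hopf order, using the hypothesis $ p \nmid [L:K] $ to force the associated order $ \AH $ to be maximal, and then to invoke the Hopf-order version of Childs's theorem quoted in the introduction. First I would recall the structure theory of Hopf-Galois extensions of $ p $-adic fields: by the Greither--Pareigis description, $ H $ is a $ K $-form of a group algebra, so after base change to a suitable finite extension $ \tilde{K}/K $ we have $ \tilde{K} \otimes_K H \cong \tilde{K}[N] $ for a group $ N $ of order $ n = [L:K] $. Since $ p \nmid n $, the group algebra $ \tilde{K}[N] $ is separable (Maschke's theorem over a field of characteristic zero, applied integrally: $ \tilde{\OK}[N] $ is a maximal order because $ n $ is a unit). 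I would then argue that separability descends: $ H $ itself is a separable $ K $-algebra, hence $ H \cong \prod_i K_i $ is a product of fields, and moreover the obstruction to $ \AH $ being maximal in $ H $ is measured by the different/discriminant, which is supported only at primes dividing $ n $; as $ p \nmid n $, the relevant local discriminant is a unit and $ \AH $ is the (unique) maximal order $ \OH $ in $ H $.

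The next step is to verify that this maximal order $ \OH $ is in fact a \emph{Hopf} order in $ H $. Here I would use commutativity of $ H $ in an essential way: for a commutative separable $ K $-algebra that carries a Hopf algebra structure, one expects the maximal order to inherit the comultiplication, counit and antipode integrally, because these structure maps are defined over $ \OK[1/n] $-points and $ n $ is a unit in $ \OK $ when $ p \nmid n $. Concretely, I would check that $ \Delta(\OH) \subseteq \OH \otimes_{\OK} \OH $ by base-changing to $ \tilde{K} $, where $ \OH $ becomes (an order inside) $ \tilde{\OK}[N] $ and the group-like basis elements manifestly satisfy $ \Delta(\sigma) = \sigma \otimes \sigma $; a descent argument along the faithfully flat extension $ \tilde{\OK}/\OK $ — available since the ramification of $ \tilde{K}/K $ is coprime to $ n $ or can be arranged so — then gives the claim over $ \OK $. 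Thus $ \AH = \OH $ is a Hopf order in $ H $.

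Finally, I would apply the generalisation of Childs's theorem stated in the introduction: since $ L/K $ is $ H $-Galois and its associated order $ \AH $ is a Hopf order in $ H $, it follows that $ \OL $ is a free $ \AH $-module, completing the proof. I expect the main obstacle to be the second step — showing that the maximal order is Hopf-stable — rather than the arithmetic input. The delicate point is that the Greither--Pareigis forms need not split over an extension whose ramification is coprime to $ p $, so the naive faithfully-flat descent of the comultiplication requires care; one may instead need to argue locally on $ \operatorname{Spec}\OK $ and use that $ H $, being commutative and separable with $ n $ invertible, is an étale Hopf algebra whose integral model is automatically a Hopf order. The hypothesis that $ H $ is commutative is what makes the Hopf structure on the maximal order unambiguous, since for noncommutative $ H $ the maximal order need not be unique and need not be a Hopf order, which is precisely why the theorem is restricted to the commutative case.
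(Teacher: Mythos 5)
Your opening move --- use $p \nmid [L:K]$ to show $\AH$ is the unique maximal order of $H$ --- is exactly the paper's strategy, but your justification for it has a hole. Knowing that $H$ is separable and that its maximal order has unit discriminant tells you nothing about which order the associated order $\AH$ actually is: the associated order of a lattice in a product of fields can be any order whatsoever, so a discriminant count cannot identify it. What is missing is a lower bound on $\AH$. The paper gets this from the explicit Greither--Pareigis action formula: writing $H = E[N]^{G}$ with $E$ the Galois closure, every element of $\OE[N]^{G}$ acts on $\OL$ through Galois automorphisms with coefficients in $\OE$, hence maps $\OL$ into $\OE \cap L = \OL$; thus $\OE[N]^{G} \subseteq \AH$ (Proposition \ref{Fixed_Points_Subset_A}). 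Combined with the fact that $\OE[N]$ is the maximal order of $E[N]$ because $|N| \in \OE^{\times}$, and that taking $G$-fixed points preserves maximality (an element of the maximal order of $H$ is integral over $\OE$, hence lies in $\OE[N] \cap E[N]^{G}$), this forces $\AH = \OE[N]^{G}$ to be maximal. Without that containment your first step does not close.

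The second and more serious gap is your route from ``$\AH$ is maximal'' to freeness. You propose to show the maximal order is a Hopf order and invoke Childs's theorem, and you correctly sense this is the delicate point --- but it is not merely delicate, it fails. One has $\Delta\left(\OE[N]^{G}\right) \subseteq \left(\OE[N] \otimes_{\OE} \OE[N]\right)^{G}$, and the right-hand side equals $\OE[N]^{G} \otimes_{\OK} \OE[N]^{G}$ only when $E/K$ is unramified (this is exactly Byott's lemma, Proposition \ref{Byott_Unramified}, which the paper uses only in the unramified case of Section 3). In the present theorem $E/K$ may be ramified, and when the inertia subgroup of $G$ acts nontrivially on $N$ by conjugation, $H$ admits \emph{no} Hopf orders at all: a Hopf order in a commutative $H$ is a finite flat group scheme model over $\OK$, any such model of order prime to $p$ is \'etale, and an \'etale model exists only if the Galois action on the geometric points is unramified. (A concrete instance: a ramified $V_{4}$-extension of $\Q_{3}$ with $N$ a cyclic regular subgroup on which inertia acts by inversion.) This is precisely why the paper does not use Childs's theorem here; instead it applies Proposition \ref{maximal_order_free}: over the unique maximal order of a commutative separable algebra over a $p$-adic field, the lattice $\OL$ is projective, and since $L$ is free of rank one over $H$, projective implies free. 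Replacing your second step by that argument repairs the proof.
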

\noindent In section 5 we generalise these theorems slightly in order to study completions of extensions of number fields. We call a Galois extension $ L/K $ of number fields  {\em domestic} if no prime of $ \OK $ lying above a prime number dividing $ [L:K] $ ramifies in $ \OL $. By combining these generalised results we obtain the following analogue of Noether's theorem for nonclassical Hopf-Galois structures on domestic extensions: 
\begin{thm} \label{intro_cor_domestic}
Let $ L/K $ be a  finite abelian domestic extension of number fields and let $ H $ be a commutative Hopf algebra giving a Hopf-Galois structure on the extension. Then $ \OL $ is a locally free $ \AH $-module.
\end{thm}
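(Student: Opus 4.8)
The plan is to check local freeness one prime at a time, reducing in each case to one of the two local theorems above (in the mildly generalised forms that Section~5 establishes for completions). By definition, to prove that $\OL$ is a locally free $\AH$-module I must show that for every prime $\p$ of $\OK$ the completed ring $\OLp = \OKp \otimes_{\OK} \OL$ is free over $\AHp = \OKp \otimes_{\OK} \AH$. So I would fix a prime $\p$ of $\OK$, lying above the rational prime $p$, and pass to $\p$-adic completions. Here $L_\p := K_\p \otimes_K L$ is not a field but an étale $K_\p$-algebra, $L_\p \cong \prod_{\P \mid \p} L_\P$; the base-changed Hopf algebra $H_\p := K_\p \otimes_K H$ is still commutative and gives a Hopf-Galois structure on $L_\p/K_\p$, and $\AHp$ is the associated order of $\OLp$ in $H_\p$. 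Supplying these base-change facts, and arranging that freeness over $\AHp$ can be read off from the factors $L_\P$, is precisely the purpose of the generalisations in Section~5; the abelian hypothesis on $L/K$ together with the commutativity of $H$ is what keeps the completed data within the hypotheses of the two local theorems.

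With $\p$ fixed, I would split into two cases according to the rational prime $p$ lying below it. If $p \nmid [L:K]$, then $p$ divides none of the local degrees $[L_\P : K_\p]$ (each of which divides $[L:K]$), and since $H_\p$ is commutative the generalised form of Theorem~\ref{intro_theorem_maximal_local} applies to $L_\p/K_\p$ and shows that $\OLp$ is free over $\AHp$. If instead $p \mid [L:K]$, then the hypothesis that $L/K$ is domestic says exactly that $\p$ is unramified in $\OL$, so each $L_\P/K_\p$ is an unramified extension of $p$-adic fields; hence the generalised form of Theorem~\ref{intro_theorem_unramified_local} applies and again shows that $\OLp$ is free over $\AHp$. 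Every prime $\p$ of $\OK$ lies in exactly one of these two cases, so $\OLp$ is free over $\AHp$ for all $\p$, which is the desired conclusion.

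The real content is therefore pushed into the two local theorems and the base-change analysis of Section~5; the corollary itself is the elementary remark that domesticity is designed precisely to cover the primes above divisors of $[L:K]$ — the primes that Theorem~\ref{intro_theorem_maximal_local} cannot reach — while Theorem~\ref{intro_theorem_unramified_local} disposes of exactly those. I expect the only genuine obstacle to lie in the preceding sections rather than here: one must track how a Hopf-Galois structure on $L/K$ and its associated order behave under completion, in particular how they distribute over the primes $\P \mid \p$, so that a free-module statement proved factor-by-factor (or uniformly, via the extended local theorems) really does yield freeness of $\OLp$ over $\AHp$. Once that machinery is in place, the argument above is just a case split.
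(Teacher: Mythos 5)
Your proposal is correct and follows essentially the same route as the paper: the paper's proof of Theorem \ref{cor_domestic} is exactly this two-case reduction to Theorems \ref{theorem_unramified_global} and \ref{theorem_maximal_global}, with all the base-change content deferred to those results. The only cosmetic difference is that the paper splits on whether $\p$ ramifies in $\OL$ (using domesticity to conclude $p \nmid [L:K]$ in the ramified case), whereas you split on whether $p \mid [L:K]$ (using domesticity to conclude $\p$ is unramified in that case) --- the two dichotomies are interchangeable here.
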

\noindent In all of these cases we find that $ \AH $ has the same explicit description, connected to the theorem of Greither and Pareigis.

\section{Hopf-Galois Structures}

The notion of a Hopf-Galois structure is defined for certain extensions of commutative rings. We shall be interested mainly in studying Hopf-Galois structures on finite separable extensions of fields, but we give the definition in this more general context. Let $ R $ be a commutative ring with unity, $ S $ an $ R $-algebra which is finitely generated and projective as an $ R $-module, and $ H $ an $ R $-Hopf algebra which is finitely generated and projective as an $ R $-module. We shall write $ \varepsilon : H \rightarrow R $ for the counit of $ H $ and $ \Delta: H \rightarrow H \otimes_{R} H $ for the comultiplication of $ H $. We shall also make use of Sweedler notation
\[ \Delta(h) = \sum_{(h)} h_{(1)} \otimes h_{(2)}. \]
We say that $ S $ is an {\em $ H $-module algebra} if $ S $ is an $ H $-module and for all $ h \in H $ and $ s,t \in S $ we have

\begin{eqnarray*}
h \cdot(st) & = & \sum_{(h)} (h_{(1)} \cdot s) (h_{(2)} \cdot t) \\
h \cdot 1 & = & \varepsilon (h)1.
\end{eqnarray*} 

\begin{defn} \label{HGS_defn}
We say that $ S $ is an {\em $ H $-Galois extension of $ R $} ({\em $ H $-Galois} for short), or that $ H $ gives a {\em Hopf-Galois structure} on the extension, if $ S $ is an $ H $-module algebra and the $ R $-linear map
\[ j: S \otimes_{R} H \rightarrow End_{R}(S) \]
defined by
\[ j(s \otimes h)(t) = s(h \cdot t) \hspace{0.1in} \mbox{for} \hspace{0.1in} s,t \in S, \hspace{0.1in} h \in H \]
is an $ R $-module isomorphism. 
\end{defn}

\noindent A given finite separable extension of fields $ L/K $ may admit a number of Hopf-Galois structures. If the extension is Galois with group $ G $ then it admits at least the classical structure with Hopf algebra $ K[G] $. A theorem of Greither and Pareigis allows for the enumeration of all Hopf-Galois structures admitted by $ L/K $. Let $ E/K $ be the normal closure of $ L/K $. Let $ G = \Gal{E/K}, G^{\prime} = \Gal{E/L} $ and let $ X = \{ g G^{\prime}  \mid g \in G \} $ be the left coset space of $ G^{\prime} $ in $ G $. We shall write $ \overline{x} $ for the coset $ x G^{\prime} $, and $ \perm{X} $ for the group of permutations of the finite set $ X $. Define an embedding $ \lambda : G \rightarrow \perm{X} $ by left translation:
\[ \lambda(g)(\overline{x}) = \overline{gx} \mbox{  for } g \in G \mbox{ and } \overline{x} \in X. \]
Finally, we call a subgroup $ N $ of $ \perm{X} $ {\em regular} if $ |N| = |X| $ and $ N $ acts transitively on $ X $. We can now state the theorem of Greither and Pareigis:

\begin{thm}[{\bf Greither and Pareigis}] \label{GP_Theory}
There is a bijection between regular subgroups $ N $ of $ \perm{X} $ normalised by $ \lambda (G) $ and Hopf-Galois structures on $ L/K $. If $ N $ is such a subgroup, then $ G $ acts on the group algebra $ E[N] $ by acting simultaneously on the coefficients as the Galois automorphisms and on the group elements by conjugation via the embedding $ \lambda $. The Hopf algebra giving the Hopf-Galois structure corresponding to the subgroup $ N $ is 
\[ H = E[N]^{G} = \left\{ z \in E[N] \mid \,^{g}\!z = z \mbox{ for all } g \in G \right\}. \]
 Such a Hopf algebra then acts on the extension $ L/K $ as follows: if $ \displaystyle{  \sum_{n \in N} c_{n} n  \in H} $ (with $ c_{n} \in E $ a priori), then
\begin{equation} \label{GP_Action_Eqn}
\left( \sum_{n \in N} c_{n} n \right) \cdot x = \sum_{n \in N} c_{n} (n^{-1}(\overline{1_{G}}))x.
\end{equation}
\end{thm}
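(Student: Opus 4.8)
The plan is to prove the theorem by faithfully flat descent along the Galois extension $E/K$, thereby reducing the classification of Hopf-Galois structures on the (possibly non-normal) extension $L/K$ to a transparent classification over $E$, where $L \otimes_{K} E$ splits completely, together with a single compatibility condition governing the Galois action. Throughout I write $G = \Gal{E/K}$, $G^{\prime} = \Gal{E/L}$, and $X = \{ g G^{\prime} \mid g \in G \}$, and I regard $G$ as acting on $X$ through the embedding $\lambda$.

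First I would base change to $E$. Since $E$ contains the normal closure of $L$, evaluation at the $K$-embeddings of $L$ into $E$ furnishes a canonical isomorphism of $E$-algebras $L \otimes_{K} E \cong \mbox{Map}(X, E)$, where $\mbox{Map}(X,E) = E^{X}$ is the split \'etale algebra of $E$-valued functions on the finite set $X$. Because the map $j$ of Definition \ref{HGS_defn} is $E$-linear and $\mbox{End}_{K}(L) \otimes_{K} E \cong \mbox{End}_{E}(L \otimes_{K} E)$, the condition that $j$ be an isomorphism is both preserved and reflected by this faithfully flat base change. I would therefore show that any Hopf-Galois structure $H$ on $L/K$ base changes to a Hopf-Galois structure $H \otimes_{K} E$ on $\mbox{Map}(X,E)/E$, and, conversely, that the structure on $L/K$ is recovered from its base change together with the descent datum supplied by the natural semilinear action of $G$ on $L \otimes_{K} E \cong \mbox{Map}(X,E)$.

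The crux is the classification of Hopf-Galois structures on the split algebra $\mbox{Map}(X,E)$ over $E$. Here I would argue that every such structure has Hopf algebra a group algebra $E[N]$ for a uniquely determined regular subgroup $N \subseteq \perm{X}$ acting on $\mbox{Map}(X,E)$ by permuting the points of $X$. In one direction, given a regular $N$ one verifies directly from Definition \ref{HGS_defn} that $j$ is an isomorphism precisely because $N$ acts simply transitively on $X$; regularity is exactly the condition making the relevant linear map invertible. The reverse direction --- that no other Hopf algebras can arise --- is where the argument does real work: since $\mbox{Map}(X,E)$ is split \'etale, the principal homogeneous space structure that a Hopf-Galois structure encodes must be taken under a constant \'etale group scheme, which forces the Hopf algebra to be a group algebra and its group to embed as a regular subgroup of $\perm{X}$. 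I expect this structure-theoretic step, pinning down the admissible Hopf algebras over the split base, to be the main obstacle.

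Finally I would carry out the descent. The semilinear action of $G$ on $\mbox{Map}(X,E)$ induces conjugation by $\lambda(G)$ on $\perm{X} \subseteq \mbox{End}_{E}(\mbox{Map}(X,E))$, so the subalgebra $E[N]$, and with it the entire Hopf-Galois structure, is stable under $G$ if and only if $\lambda(G)$ normalises $N$. For such $N$, faithfully flat descent produces a $K$-Hopf algebra as the fixed ring $H = E[N]^{G}$ under the combined action of $G$ on coefficients as Galois automorphisms and on group elements by conjugation via $\lambda$, and shows that $L$ is $H$-Galois; since descent along $E/K$ is an equivalence, the assignment $N \mapsto H$ is a bijection onto the Hopf-Galois structures on $L/K$. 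To obtain the explicit action, I would identify $x \in L$ with the function $\overline{g} \mapsto g(x)$ in $\mbox{Map}(X,E)$, let $\sum_{n \in N} c_{n} n \in E[N]$ act by permuting points, and evaluate at the distinguished coset $\overline{1_{G}}$; this evaluation recovers the closed formula \eqref{GP_Action_Eqn}.
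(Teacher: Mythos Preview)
Your proposal is a correct outline of the standard proof of the Greither--Pareigis theorem via faithfully flat descent. However, the paper does not prove this theorem at all: its entire proof is the single line ``See \cite[Theorem 6.8]{ChTwe}.'' The theorem is quoted as background, not established.

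That said, your sketch is essentially the argument one finds in the cited reference (Childs's book). The three stages you describe --- base change along $E/K$ to split $L\otimes_K E$ as $\mbox{Map}(X,E)$, classification of Hopf-Galois structures on a split \'etale algebra as group algebras $E[N]$ for regular $N\subseteq\perm{X}$, and Galois descent yielding $H=E[N]^G$ with the normalisation condition on $N$ --- are exactly the steps of the original proof. You also correctly flag the genuinely nontrivial point: showing that \emph{every} Hopf-Galois structure on the split algebra arises from a group algebra, which in the literature is handled by analysing the grouplike elements of the base-changed Hopf algebra (equivalently, recognising the underlying torsor as a torsor under a constant group scheme). Your derivation of the action formula \eqref{GP_Action_Eqn} by evaluating at $\overline{1_G}$ is also the standard one. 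So there is nothing to compare: you have supplied the proof the paper chose to cite rather than reproduce.
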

\begin{proof}
See \cite[Theorem 6.8]{ChTwe}.
\end{proof}

\noindent The Hopf algrebras produced by Theorem \ref{GP_Theory} inherit the Hopf algebra structure maps from the group algebra $ E[N] $, and are therefore cocommutative. Such a Hopf algebra is commutative precisely when the group $ N $ is abelian. Since all the fields we shall study have characteristic zero, all the Hopf algebras we shall study are separable $ K $-algebras (see \cite[(11.4)]{Wat}). The normal basis theorem generalises to $ H $-Galois extensions of fields: if $ L/K $ is such an extension then $ L $ is a free $ H $-module of rank one (see \cite[(2.16)]{ChTwe}). For extensions of local or global fields, it is natural to investigate analogous results at integral level. To study the structure of $ \OL $ relative to the Hopf-Galois structure with corresponding Hopf algebra $ H $ we define within $ H $ the associated order of $ \OL $:
\[ \AH = \{ h \in H \mid h \cdot x \in \OL \mbox{ for all } x \in \OL \}. \]
As noted in the introduction, $ \AH $ is the largest order in $ H $ for which $ \OL $ is a module. We are particularly interested in establishing whether $ \OL $ is a free (or locally free) $ \AH $-module. In the $ p $-adic case we have already mentioned Childs' theorem. We call an order $ \Lambda $ in a $ K $-Hopf algebra $ H $ a {\em Hopf order} if $ \Lambda $ is an $ \OK $-Hopf algebra with operations induced from $ K $.

\begin{thm}{\bf (Childs)} \label{childs_hopf}
Let $ L/K $ be a finite $ H $-Galois extension of $ p $-adic fields. If the associated order $ \AH $ is a Hopf order in $ H $, then $ \OL $ is a free $ \AH $-module. 
\end{thm}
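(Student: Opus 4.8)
The plan is to prove Childs' theorem by exploiting the standard descent machinery for Hopf orders. First I would recall that when $\AH$ is a Hopf order in $H$, the extension $\OL/\OK$ is what is known as an $\AH$-Galois extension of $\OK$ (a Hopf-Galois extension of rings), or at least becomes one after a suitable faithfully flat base change. The key structural input is that, because $H = E[N]^G$ arises from the Greither--Pareigis construction, base change to the normal closure $E$ (or its valuation ring $\OE$) trivialises things: $\OE \otimes_{\OK} H \cong \OE[N]$ as $\OE$-Hopf algebras, and correspondingly one expects $\OE \otimes_{\OK} \AH$ to be an $\OE$-order in $\OE[N]$ whose identification with a group ring can be controlled. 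The strategy is then to show that $\OL$ is an $\AH$-Galois extension of $\OK$ and to invoke the fact that a finite Hopf-Galois extension of a local ring, with the Hopf algebra a finitely generated projective module, forces the extension ring to be free of rank one over the Hopf order --- this is the integral analogue of the normal basis theorem for Hopf-Galois extensions.

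Concretely, the steps I would carry out are: (i) verify that $\AH$ being a Hopf order means the counit, comultiplication, and antipode of $H$ restrict to $\AH$ with values in $\AH$, $\AH \otimes_{\OK} \AH$, and $\AH$ respectively, so that $\AH$ is genuinely an $\OK$-Hopf algebra; (ii) show that the Galois map $\OL \otimes_{\OK} \AH \to \mathrm{End}_{\OK}(\OL)$, which is the restriction of the isomorphism $j$ from Definition~\ref{HGS_defn}, is itself an isomorphism --- equivalently that $\OL$ is an $\AH$-module algebra and the relevant map is bijective; (iii) conclude from the theory of Hopf-Galois extensions over local rings that $\OL$ is $\AH$-free of rank one, since over a local (in particular semilocal complete) ring a Hopf-Galois extension by a finitely generated projective Hopf algebra admits a normal basis. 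Step (ii) is where one uses that $\OK$ is a complete discrete valuation ring: the determinant of $j$ restricted to integral lattices is a unit precisely because $\AH$ was chosen maximal and is a Hopf order, and one checks this after the faithfully flat base change $\OK \to \OE$ where everything becomes the split situation $\OE[N]$ acting on $\mathrm{Map}(X, \OE)$.

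The main obstacle I expect is step (ii): establishing that the integral Galois map is an isomorphism rather than merely injective with torsion cokernel. The subtlety is that $\OL$ being a module over the Hopf order $\AH$ does not automatically make the pair $(\OL, \AH)$ Hopf-Galois over $\OK$; one must verify the surjectivity (or unimodularity) of $j$ on the integral level. The cleanest route is to base-change to $\OE$: there $H \otimes_{\OK} \OE \cong \OE[N]$, the ring $\OL \otimes_{\OK} \OE$ decomposes as a product of copies of $\OE$ indexed by $X = G/G'$, and the action of $N$ is by the regular-type permutation action described in equation~\eqref{GP_Action_Eqn}; one then shows the associated order $\AH \otimes_{\OK} \OE$ must be all of $\OE[N]$ (using that $\OE/\OK$ being unramified-or-tame keeps discriminants unit, together with maximality of the associated order), whence the Galois map is visibly an isomorphism over $\OE$, and faithful flatness of $\OK \to \OE$ descends this to $\OK$. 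Once $(\OL,\AH)$ is Hopf-Galois over the local ring $\OK$, freeness of rank one follows from the standard structure theorem, and since $\AH$ is the unique order over which $\OL$ can be free, this is the desired conclusion.
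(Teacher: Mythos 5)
The paper does not prove this statement at all --- it quotes it from Childs (\cite[Theorem 12.7]{ChTwe}) --- so your proposal has to stand on its own, and its central step fails. In step (ii) you reduce to the base change $ \OK \to \OE $ and assert that $ \AH \otimes_{\OK} \OE $ ``must be all of $ \OE[N] $, using that $ \OE/\OK $ being unramified-or-tame keeps discriminants unit.'' But the theorem carries no tameness hypothesis: it applies to arbitrary finite $ H $-Galois extensions of $ p $-adic fields, and its whole point (see the Byott examples cited in the introduction) is the wildly ramified case. There $ \OL \otimes_{\OK} \OE $ is a \emph{proper} suborder of the maximal order $ \prod_{\overline{x} \in X} \OE $ of $ L \otimes_{K} E $, the relevant discriminants are non-units, and $ \AH \otimes_{\OK} \OE $ is typically a proper Hopf suborder of $ \OE[N] $ (Larson orders, Tate--Oort orders, and so on). The ``split situation'' you want to descend from simply does not exist in general, so unimodularity of the integral Galois map cannot be verified this way. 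You have also not isolated where the hypothesis that $ \AH $ is the \emph{associated} order --- rather than merely some Hopf order acting on $ \OL $ --- enters; your appeal to ``maximality'' is precisely the crux and is left unargued.

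The actual argument runs through integrals rather than base change. Since $ \OK $ is a complete discrete valuation ring, the module of left integrals of the Hopf order $ \AH $ is free of rank one (Larson--Sweedler), generated by some $ \theta $. For $ x \in \OL $ one has $ h \cdot (\theta \cdot x) = \varepsilon(h) (\theta \cdot x) $ for all $ h \in H $, so $ \theta \cdot \OL \subseteq L^{H} \cap \OL = \OK $, i.e.\ $ \theta \cdot \OL = \pi^{a} \OK $ for some $ a \geq 0 $. If $ a > 0 $ then $ \pi^{-1} \theta $ still maps $ \OL $ into $ \OK \subseteq \OL $, hence lies in $ \AH $ by the defining maximality of the associated order; but $ \pi^{-1} \theta $ is again a left integral, contradicting that $ \theta $ generates the integrals of $ \AH $. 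Hence $ a = 0 $, so $ \OL $ is an $ \AH $-tame (indeed $ \AH $-Galois) extension of $ \OK $, and then $ \OL \cong \AH^{*} $ is $ \AH $-free of rank one over the local ring $ \OK $ --- which is your step (iii), the one part of your outline that is sound. The fix is to replace your step (ii) and the unexamined use of maximality with this integral argument.
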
 
\begin{proof}
See \cite[Theorem 12.7]{ChTwe}. 
\end{proof}

\noindent We also state the following, which comes from integral representation theory. We recall that since we are concerned with fields of characteristic zero, a Hopf algebra $ H $ produced by Theorem \ref{GP_Theory} is separable. It follows (see \cite[Proposition 26.10]{CurRei}) that if $ H $ is commutative then it has a unique maximal order. 

\begin{prop} \label{maximal_order_free}
Let $ L/K $ be an $ H $-Galois extension of $ p $-adic fields for a commutative Hopf algebra $ H $. If $ \AH $ is the unique maximal order in $ H $ then $ \OL $ is a free $ \AH $-module. 
\end{prop}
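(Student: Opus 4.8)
The plan is to combine the structure theory of commutative separable algebras with the fact that the base is a complete discrete valuation ring. Since $H$ is a commutative separable $K$-algebra it decomposes as a finite product $H = \prod_{i=1}^{r} L_{i}$ of finite separable field extensions $L_{i}/K$, and its unique maximal order is $\AH = \prod_{i=1}^{r} {\goth O}_{L_{i}}$, where ${\goth O}_{L_{i}}$ is the valuation ring of $L_{i}$. Write $e_{1}, \ldots, e_{r}$ for the primitive idempotents of this decomposition; then $e_{i} \in \AH$, $e_{i} H = L_{i}$ and $e_{i} \AH = {\goth O}_{L_{i}}$.

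Next I would realise $\OL$ as a full lattice over $\AH$. By the normal basis theorem for $H$-Galois extensions, $L$ is free of rank one as an $H$-module, so we may fix an $H$-module isomorphism $L \cong H$. Since $\OL$ spans $L$ over $K$, its image is a full $\OK$-lattice in $H$, and since $\OL$ is an $\AH$-module by the very definition of the associated order, this image is stable under $\AH$. It therefore suffices to prove that any full $\AH$-lattice $M$ in $H$ is isomorphic to $\AH$ as an $\AH$-module.

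Applying the idempotents gives $M = \bigoplus_{i=1}^{r} e_{i} M$, where $e_{i} M$ is a full ${\goth O}_{L_{i}}$-lattice in the one-dimensional $L_{i}$-vector space $e_{i} H = L_{i}$; that is, $e_{i} M$ is isomorphic to a fractional ${\goth O}_{L_{i}}$-ideal. Here the hypothesis that $K$ is a $p$-adic field enters: each $L_{i}$ is then also a $p$-adic field, so ${\goth O}_{L_{i}}$ is a complete discrete valuation ring, hence a principal ideal domain, and every fractional ${\goth O}_{L_{i}}$-ideal is free of rank one. Thus $e_{i} M \cong {\goth O}_{L_{i}}$ for each $i$, and reassembling gives $M \cong \prod_{i=1}^{r} {\goth O}_{L_{i}} = \AH$. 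Applying this with $M$ the image of $\OL$ shows that $\OL$ is a free $\AH$-module of rank one.

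I expect no serious obstacle: the argument is essentially bookkeeping built on the decomposition of a commutative separable algebra and its maximal order into field/valuation-ring factors, together with principality of ideals in a complete discrete valuation ring. The one point needing a little care is the verification that $\OL$ is genuinely a \emph{full} $\AH$-lattice in $H$, which is exactly what the normal basis theorem supplies. Note that one cannot simply quote Childs' Theorem \ref{childs_hopf} here, since the maximal order of $H$ need not be a Hopf order: $\AH \otimes_{\OK} \AH$ can be strictly smaller than the maximal order of $H \otimes_{K} H$ when some $L_{i} \otimes_{K} L_{j}$ is ramified. This is why a separate argument for the maximal-order case is required.
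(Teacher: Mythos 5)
Your proof is correct, and it takes a genuinely more explicit route than the paper. The paper's proof consists of two citations from integral representation theory: since $ \AH $ is a maximal order, every $ \AH $-lattice is projective (\cite[Theorem 26.12]{CurRei}), and over a complete discrete valuation ring two lattices over a maximal order in a separable algebra are isomorphic as soon as their rational spans are (\cite[Theorem 18.10]{MaxOrd}); combined with the normal basis theorem $ L \cong H $ this gives $ \OL \cong \AH $ at once. You instead unwind what the maximal order actually is, using commutativity in an essential way: $ H = \prod_{i} L_{i} $, $ \AH = \prod_{i} {\goth O}_{L_{i}} $, the idempotents $ e_{i} $ are integral and hence lie in $ \AH $, and a full $ \AH $-lattice splits under them into nonzero fractional ideals of the $ {\goth O}_{L_{i}} $, which are principal because each $ {\goth O}_{L_{i}} $ is a complete discrete valuation ring. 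What your version buys is self-containedness and a clear view of exactly where the two hypotheses enter (commutativity for the product decomposition, completeness for principality of the $ e_{i}M $ --- over a number field one would only get local freeness at this step). What the paper's version buys is generality: the two cited theorems hold for noncommutative separable algebras too, so the same two-line proof gives freeness whenever $ \AH $ happens to be maximal, even though uniqueness of the maximal order is particular to the commutative case. Your closing remark that Childs' Theorem \ref{childs_hopf} is not available here because a maximal order need not be a Hopf order is also correct, and is precisely why the proposition is needed as a separate tool.
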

\begin{proof}
Since $ \AH $ is the unique maximal order in $ H $, \cite[Theorem 26.12]{CurRei} implies that $ \OL $ is $ \AH $-projective. Since $ K $ is a $ p $-adic field and $ L $ is a free $ H $-module, we may apply \cite[Theorem 18.10]{MaxOrd}, and conclude that $ \OL $ is a free $ \AH $-module. 
\end{proof}

\noindent We end this section by considering a special order inside a Hopf algebra produced by the theorem of Greither and Pareigis (Theorem \ref{GP_Theory}).  Suppose that $ L/K $ is an extension of $ p $-adic fields or number fields with Galois closure $ E $, and that $ L/K $ is $ H $-Galois for some Hopf algebra $ H $. Then by Theorem \ref{GP_Theory}, we have that $ H = E[N]^{G} $ for $ N $ some regular subgroup of $ \perm{X} $ normalised by $ \lambda (G) $. Within this algebra, we shall study the order $ \OE[N]^{G} $.

\begin{prop} \label{Fixed_Points_Subset_A}
We have $ \OE[N]^{G} \subseteq \AH $.
\end{prop}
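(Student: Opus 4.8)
The plan is to verify directly the integrality condition defining $\AH$, using the explicit action formula of Theorem \ref{GP_Theory}. Since $\OE[N]^{G} \subseteq E[N]^{G} = H$, any $z \in \OE[N]^{G}$ already lies in $H$, so it suffices to show that $z \cdot x \in \OL$ for every $x \in \OL$.

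I would write $z = \sum_{n \in N} c_{n} n$ with coefficients $c_{n} \in \OE$; this is precisely what membership in $\OE[N]$ says, before imposing $G$-invariance. For $x \in \OL$, formula (\ref{GP_Action_Eqn}) gives
\[ z \cdot x = \sum_{n \in N} c_{n} \, \bigl( n^{-1}(\overline{1_{G}}) \bigr)(x), \]
where, for a coset $\overline{g} = g G^{\prime} \in X$ and $t \in L$, the symbol $\overline{g}(t)$ means $g(t)$ — well defined because $G^{\prime} = \Gal{E/L}$ fixes $L$ pointwise. The key observation is that each $g \in G = \Gal{E/K}$ carries $\OL$ into $\OE$: if $x \in \OL$ then $x$ is integral over $\OK$, hence so is $g(x)$, and $g(x) \in E$, so $g(x) \in \OE$. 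Combined with $c_{n} \in \OE$, this forces every term of the sum, and hence $z \cdot x$ itself, to lie in $\OE$.

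On the other hand, $L$ is an $H$-module and $z \in H$, so $z \cdot x \in L$. Therefore $z \cdot x \in L \cap \OE = \OL$ — the intersection being $\OL$ both in the $p$-adic case, where $\OL$ is the valuation ring of $L$, and in the number field case, where $\OL$ is the integral closure of $\OK$ in $L$ — and we conclude $z \in \AH$. The one point that needs care, though it is not a genuine obstacle, is that the right-hand side of (\ref{GP_Action_Eqn}) is a priori merely an element of $E$: one must combine the $\OE$-integrality of the coefficients and coordinates (giving membership in $\OE$) with the separate fact that the Hopf action of $H$ on $L$ takes values in $L$ (giving membership in $L$), since neither observation alone yields membership in $\OL$.
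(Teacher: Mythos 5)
Your proof is correct and follows essentially the same route as the paper's: write $z$ with coefficients in $\OE$, use the action formula of Theorem \ref{GP_Theory} to see that $z \cdot x$ lands in $\OE$ (since Galois automorphisms preserve integrality), and intersect with $L$ to conclude $z \cdot x \in \OL$. The extra care you take over the well-definedness of the coset action and the two-sided nature of the intersection argument is sound but not a departure from the paper's argument.
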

\begin{proof}
Let $ z \in \OE[N]^{G} $. Then $ z \in \OE[N] $, so we may write
\[ z =  \sum_{n \in N} c_{n} n \] 
with $ c_{n} \in \OE $. Since $ z \in H $, the action of $ z $ on an element $ x \in  L $ is given by equation (\ref{GP_Action_Eqn}).
Now for each $ n \in N $, any group element representing $ n^{-1}(\overline{1_{G}}) $ is a Galois automorphism of $ E $, so if $ x \in \OL $ then $ n^{-1}(\overline{1_{G}})x \in \OE $. Therefore for $ x \in \OL $ we have
\[ z \cdot x = \sum_{n \in N} c_{n} n^{-1}(\overline{1_{G}})x \in \OE. \]
Since also $ z \cdot x \in L $, we have that $ z \cdot x \in \OE \cap L = \OL $, whence $ z \in \AH $. 
\end{proof}

\noindent The proofs in this paper involve showing that under appropriate conditions we have locally the reverse inclusion. 

\section{Unramified Extensions}

Throughout this section, we let $ L/K $ be a finite unramified extension of $ p $-adic fields. Then $ L/K $ is automatically Galois, with cyclic Galois group, say $ G $. By Greither and Pareigis's theorem (Theorem \ref{GP_Theory}), a Hopf algebra $ H $ giving a Hopf-Galois structure on $ L/K $ is of the form $ L[N]^{G} $ for $ N $ some regular subgroup of $ \perm{G} $ normalised by $ \lambda (G) $. We shall show that $ \AH = \OL[N]^{G} $, and that this is a  Hopf order in $ H $, which by Theorem \ref{childs_hopf} implies that $ \OL $ is a free $ \AH $-module. We begin with a technical result:

\begin{prop}{\bf (Byott)} \label{Byott_Unramified}
Let $ X $ be a finite $ G $-set. Let $ \OL X $ denote the free $ \OL $-module on $ X $, with $ G $ acting via both $ \OL $ and $ X $. Then
\[ ( \OL X )^{G} \otimes_{\OK} \OL = \OL X. \]
\end{prop}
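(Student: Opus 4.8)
The plan is to reduce to the case where $X$ is a transitive $G$-set, to compute the $\OK$-module $(\OL X)^{G}$ explicitly, and then to recognise the natural map $(\OL X)^{G}\otimes_{\OK}\OL\to\OL X$, $z\otimes\ell\mapsto\ell z$, as multiplication by a square matrix over $\OL$ which is invertible precisely because $L/K$ is unramified. For the reduction I would split $X$ into its $G$-orbits: each orbit $\mathcal{O}$ spans a $G$-stable $\OL$-submodule $\OL\mathcal{O}$ of $\OL X$, and $\OL X=\bigoplus_{\mathcal{O}}\OL\mathcal{O}$ as $\OL$-modules equipped with a semilinear $G$-action. Since taking $G$-invariants and applying $-\otimes_{\OK}\OL$ both commute with finite direct sums, it suffices to prove the identity for a single orbit; and as a $G$-set an orbit is isomorphic to $G/H$, where $H$ is the stabiliser of one of its points. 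So I would assume $X=G/H$ for a subgroup $H\le G$ of index $m$.

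Fixing left coset representatives $g_{1},\dots,g_{m}$ for $H$ in $G$, so that $\OL[G/H]=\bigoplus_{i=1}^{m}\OL\,\overline{g_{i}}$, I would compute $(\OL[G/H])^{G}$ directly from the action $g\cdot\sum_{i}c_{i}\overline{g_{i}}=\sum_{i}g(c_{i})\,\overline{gg_{i}}$. A short bookkeeping argument shows that $\sum_{i}c_{i}\overline{g_{i}}$ is $G$-fixed if and only if $c_{i}=g_{i}(c)$ for all $i$ for a single element $c$ of $\OL^{H}=\mathcal{O}_{L^{H}}$ (note that $g_{i}(c)$ then depends only on the coset $\overline{g_{i}}$). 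Hence $c\mapsto\sum_{i}g_{i}(c)\,\overline{g_{i}}$ is an isomorphism of $\OK$-modules from $\OL^{H}$ onto $(\OL[G/H])^{G}$; in particular, if $f_{1},\dots,f_{m}$ is an $\OK$-basis of $\OL^{H}$, then the elements $z_{j}=\sum_{i}g_{i}(f_{j})\,\overline{g_{i}}$ $(1\le j\le m)$ form an $\OK$-basis of $(\OL[G/H])^{G}$.

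Finally I would evaluate the map $\phi\colon(\OL[G/H])^{G}\otimes_{\OK}\OL\to\OL[G/H]$ on these bases: writing a general source element as $\sum_{j}z_{j}\otimes\ell_{j}$ with $\ell_{j}\in\OL$, one gets
\[
\phi\Bigl(\sum_{j}z_{j}\otimes\ell_{j}\Bigr)=\sum_{i}\Bigl(\sum_{j}\ell_{j}\,g_{i}(f_{j})\Bigr)\overline{g_{i}},
\]
so with respect to the $\OL$-basis $\{z_{j}\otimes 1\}$ of the source and $\{\overline{g_{i}}\}$ of the target, $\phi$ is multiplication by the $m\times m$ matrix $M=\bigl(g_{i}(f_{j})\bigr)$ over $\OL$. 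Because $L/K$ is Galois, the restrictions $g_{i}|_{L^{H}}$ run over the $m$ distinct $K$-embeddings of $L^{H}$ into $L$, so $\det(M)^{2}$ is a generator of the discriminant ideal $\disc{\OL^{H}/\OK}$; and this ideal is all of $\OK$ because $L^{H}/K$, being a subextension of the unramified extension $L/K$, is itself unramified. Hence $\det(M)\in\OL^{\times}$, so $M\in\mathrm{GL}_{m}(\OL)$ and $\phi$ is an isomorphism, as required. The one substantive point is this last step: over $L$ the statement is merely Galois descent (equivalently the normal basis theorem) for the semilinear $G$-module $LX$, and the work is to push it down to $\OL$; I expect the main obstacle to be verifying that the transition matrix $M$ is a unit --- and here I would note that one cannot simply average over $G$, since $|G|=[L:K]$ need not be invertible in $\OK$.
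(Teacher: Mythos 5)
Your proof is correct. Note that the paper does not actually prove this proposition itself --- it defers entirely to Byott's Lemma 4.5 --- so there is no internal argument to measure you against, but your self-contained argument holds up: the orbit decomposition is legitimate because both $(-)^{G}$ and $-\otimes_{\OK}\OL$ commute with finite direct sums; the identification $(\OL[G/H])^{G}\cong\mathcal{O}_{L^{H}}$ via $c\mapsto\sum_{i}g_{i}(c)\,\overline{g_{i}}$ is the right computation (and $\mathcal{O}_{L^{H}}$ is $\OK$-free because $\OK$ is a complete discrete valuation ring, so your basis $f_{1},\dots,f_{m}$ exists); and the restrictions $g_{i}|_{L^{H}}$ are pairwise distinct precisely because $g$ and $g'$ agree on $L^{H}$ iff $g^{-1}g'\in H$, so $\det(M)^{2}$ really is the discriminant of the integral basis $\{f_{j}\}$ of $\mathcal{O}_{L^{H}}/\OK$, a unit since every subextension of an unramified extension is unramified. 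It is worth pointing out that your key mechanism --- invertibility over $\OL$ of the matrix $(g_{i}(f_{j}))$ --- is exactly the device the paper itself uses later, in the proof of Proposition~\ref{PT_Hopf_Order}, where $\det(g(a_{i}))^{2}\in\OK^{\times}$ plays the same role for a regular orbit; so your argument is very much in the spirit of the surrounding text. A more structural alternative route is to observe that $\OL/\OK$ unramified makes $\OL$ a Galois extension of rings in the sense of Chase--Harrison--Rosenberg (equivalently $\OL\otimes_{\OK}\OL\cong\prod_{G}\OL$), after which the statement is faithfully flat descent for semilinear $G$-modules; your proof trades that machinery for an explicit transition matrix, and you are right to flag that one cannot instead average over $G$, since $p$ may divide $|G|$.
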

\begin{proof}
See \cite[Lemma 4.5]{By97}.
\end{proof}

\noindent We now use this result to show that if $ L/K $ is an unramified extension of $ p $-adic fields and $ H = L[N]^{G} $ is a Hopf algebra giving a Hopf-Galois structure on the extension then we have the reverse inclusion to Proposition \ref{Fixed_Points_Subset_A}.

\begin{prop} \label{PT_Unramified_Associated}
We have $ \AH = \OL[N]^{G} $. 
\end{prop}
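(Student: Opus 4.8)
The plan is to establish the reverse inclusion $\AH \subseteq \OL[N]^{G}$, since Proposition \ref{Fixed_Points_Subset_A} already gives $\OL[N]^{G} \subseteq \AH$. The key tool is Byott's Proposition \ref{Byott_Unramified}, applied with $X = N$ regarded as a $G$-set via the conjugation action $n \mapsto \lambda(g) n \lambda(g)^{-1}$. This identifies $(\OL[N])^{G} = (\OL N)^{G}$, and Byott's result then tells us that $(\OL[N])^{G} \otimes_{\OK} \OL = \OL[N]$. The analogous statement with $L$ in place of $\OL$ is the defining feature of the Hopf-Galois structure: $H = L[N]^{G}$ satisfies $H \otimes_{K} L = L[N]$ (this is the standard base-change fact underlying Theorem \ref{GP_Theory}, coming from Galois descent). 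So at the level of $L$-algebras and $\OL$-algebras we have a clean comparison.

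The main step is then to argue that $\AH \otimes_{\OK} \OL = \OL[N]$, from which one can descend back to $K$-fixed points. First I would observe that $\AH \otimes_{\OK} \OL$ is an order in $H \otimes_{K} L = L[N]$, and that it is stable under the $G$-action (where $G$ acts on the second factor as Galois automorphisms of $\OL$ and on $\AH$ trivially, or more precisely $G$ acts diagonally and $\AH$ sits inside the fixed part). Since $L/K$ is unramified, $\OL/\OK$ is itself a Galois extension of rings (étale, with Galois group $G$), so faithfully flat descent applies: an $\OK$-submodule of $H$ is recovered as the $G$-invariants of its base change to $\OL$. Concretely, $\AH = (\AH \otimes_{\OK} \OL)^{G}$. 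Therefore it suffices to show $\AH \otimes_{\OK} \OL \supseteq \OL[N]$, i.e. that every element of $\OL[N]$, after the identification, acts on $\OL$ landing in $\OL$ — but this is essentially automatic because $\OL[N]$ acting on $\OL$ via the formula analogous to (\ref{GP_Action_Eqn}) sends $\OL$ into $\OL$ (the relevant $n^{-1}(\overline{1_G})$ are again Galois automorphisms, now of $L$ over $K$, hence preserve $\OL$). Combining, $\OL[N] \subseteq \AH \otimes_{\OK}\OL \subseteq H \otimes_K L = L[N]$ forces $\AH \otimes_{\OK} \OL = \OL[N] = \OL[N]^{G} \otimes_{\OK} \OL$; taking $G$-invariants (using that $\OL/\OK$ is faithfully flat and Galois) yields $\AH = \OL[N]^{G}$.

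The step I expect to be the main obstacle is making the descent argument precise: one needs to be careful that $\AH \otimes_{\OK} \OL$ really does act on $\OL$ (not merely on $L$) and that its image of $\OL$ lies in $\OL$, so that it is contained in the associated order of $\OL$ over $H \otimes_K L$ — which, because $L/K$ is unramified, coincides with $\OL[N]$ itself. Equivalently, one must check that the associated order construction commutes with the unramified base change $\OK \to \OL$; this is where Byott's Proposition \ref{Byott_Unramified} does the real work, since it guarantees the fixed-point order base-changes correctly and has the maximal possible size. An alternative, cleaner route is: $\OL$ is a free $\OL[N]$-module of rank one over $\OL$ — indeed $\OL \otimes_{\OK}\OL \cong \OL[G]$-module considerations, or simply a normal integral basis exists for the unramified (hence tame) extension $L/K$ — so $\OL[N]^{G}$ is already an order over which $\OL$ is free; since $\OL[N]^{G} \subseteq \AH$ and $\AH$ is the unique order over which $\OL$ could be free, the two must coincide. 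I would present whichever of these is shortest, likely leaning on Byott's proposition as the excerpt signals.
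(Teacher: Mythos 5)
Your main line of argument is essentially the paper's: base-change to $\OL$, work inside $\OL[N]$, and descend using Byott's Proposition \ref{Byott_Unramified} together with faithful flatness of $\OL$ over $\OK$. Two points need attention, however.

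First, the step ``$\OL[N] \subseteq \AH \otimes_{\OK}\OL \subseteq H \otimes_K L = L[N]$ forces $\AH \otimes_{\OK} \OL = \OL[N]$'' is a non sequitur: $L[N]$ is the whole algebra, so this sandwich excludes nothing, and there are many orders strictly between $\OL[N]$ and $L[N]$. What you actually need is the upper bound $\AH \otimes_{\OK}\OL \subseteq \OL[N]$, and the correct reason for it appears only in your ``main obstacle'' paragraph: $\AH \otimes_{\OK}\OL$ preserves $\OL\otimes_{\OK}\OL$, and because $L/K$ is unramified we have $\OL\otimes_{\OK}\OL\cong\OL^{[L:K]}$, the maximal order of $L\otimes_K L$, on which $N$ acts by permuting components, so the associated order of $\OL\otimes_{\OK}\OL$ in $L[N]$ is exactly $\OL[N]$. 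That containment is the entire content of the paper's proof; once you have $\AH\otimes_{\OK}\OL\subseteq\OL[N]=\OL[N]^G\otimes_{\OK}\OL$ (the last equality being Byott's lemma) you descend by flatness, and you never need the full equality $\AH\otimes_{\OK}\OL=\OL[N]$, although it does follow. So promote that observation from an aside to the centrepiece of the argument.

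Second, your ``alternative, cleaner route'' is not available here: the existence of a normal integral basis for the classical structure $\OK[G]$ does not by itself give freeness of $\OL$ over $\OL[N]^G$ for a nonclassical $N$ --- that freeness is precisely the conclusion of Theorem \ref{theorem_unramified_local}, which the paper deduces from this proposition (via the Hopf-order property and Childs's theorem), not the other way around. Presenting that route would be circular, so stick with the base-change argument.
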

\begin{proof}
By Proposition \ref{Fixed_Points_Subset_A}, $ \OL[N]^{G} \subseteq \AH. $ On the other hand, since $ L/K $ is unramified, we have that $ \OL \otimes_{\OK} \OL \cong \OL^{[L:K]} $, and this is the ring of integers of $ L \otimes_{K} L \cong L^{[L:K]} $. The group $ N $ acts on $ L^{[L:K]} $ by permuting the components, and so the $ L $-algebra $ H \otimes_{K} L \cong L[N] $ acts on $ L \otimes_{K} L $. The associated order of $ \OL \otimes_{\OK} \OL $ in $ L[N] $ is $ \OL[N] $. Since $ \AH \otimes_{\OK} \OL $ also acts on $ \OL \otimes_{\OK} \OL $, we conclude that 
\[ \AH \otimes_{\OK} \OL \subseteq \OL[N]. \]
So by Proposition \ref{Byott_Unramified} we have
\[ \AH \otimes_{\OK} \OL \subseteq \OL[N]^{G} \otimes_{\OK} \OL, \]
and therefore
\[ \AH \subseteq \OL[N]^{G}. \]
Hence $ \AH = \OL[N]^{G}. $
\end{proof}

\noindent We now use this explicit description of $ \AH $ to show that $ \AH $ is in fact a Hopf order. Since the Hopf algebras produced by Theorem \ref{GP_Theory} inherit the Hopf algebra structure maps from group algebras, it is sufficient to prove that the comultiplication on $ H $ restricts to $ \AH $.

\begin{prop} \label{PT_Hopf_Order}
The associated order $ \AH $ is a Hopf order in $ H $. 
\end{prop}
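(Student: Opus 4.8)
Since the Hopf algebra structure maps on $H$ are the ones inherited from the group algebra $L[N]$ under the inclusion $H = L[N]^{G} \subseteq L[N]$, and the multiplication, unit, counit and antipode of $L[N]$ obviously carry $\OL[N]$ into itself and commute with the action of $G$, the only map that needs attention is the comultiplication: we must show $\Delta(\AH) \subseteq \AH \otimes_{\OK} \AH$ (inside $H \otimes_{K} H$, into which $\AH\otimes_{\OK}\AH$ embeds as a full lattice). My plan is to verify this after the faithfully flat base change $\OK \to \OL$, where the situation trivialises, and then descend.

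The key inputs are the following. First, by Proposition \ref{PT_Unramified_Associated} together with Proposition \ref{Byott_Unramified}, we have $\AH \otimes_{\OK} \OL = \OL[N]^{G} \otimes_{\OK} \OL = \OL[N]$, the integral group ring of $N$ over $\OL$, which is patently a Hopf order in $L[N] = H \otimes_{K} L$; in particular $\Delta_{L[N]}(\OL[N]) \subseteq \OL[N] \otimes_{\OL} \OL[N]$. Secondly, under the identification $L[N] = H \otimes_{K} L$ with $G$ acting on the $L$-factor, the comultiplication $\Delta_{L[N]}$ is $\Delta \otimes_{K} \mathrm{id}_{L}$, so on $H$ it restricts to $\Delta$, with values in $H \otimes_{K} H = (L[N] \otimes_{L} L[N])^{G} \subseteq L[N] \otimes_{L} L[N]$. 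Thirdly, applying Proposition \ref{Byott_Unramified} to the $G$-set $N \times N$ (diagonal action) and performing a short computation with tensor products shows that the $G$-fixed sublattice $(\OL[N] \otimes_{\OL} \OL[N])^{G}$ coincides with $\AH \otimes_{\OK} \AH$: both are $\OK$-lattices in $H \otimes_{K} H$, one contained in the other, and both have $\OL[N] \otimes_{\OL} \OL[N]$ as their $\OL$-span, so faithful flatness of $\OL$ over $\OK$ forces them to be equal.

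Granting these, the conclusion is immediate: for $z \in \AH$ we have $\Delta(z) = \Delta_{L[N]}(z) \in \OL[N] \otimes_{\OL} \OL[N]$; since $z$ is $G$-fixed and $\Delta_{L[N]}$ is $G$-equivariant, $\Delta(z)$ lies in $(\OL[N] \otimes_{\OL} \OL[N])^{G} = \AH \otimes_{\OK} \AH$, as required. I expect the one delicate point to be the identification in the third input: one must keep careful track of the three ambient objects $H \otimes_{K} H$, its base change $L[N] \otimes_{L} L[N]$, and the various $\OK$- and $\OL$-lattices inside them, and confirm that all the natural maps in sight are genuine inclusions, so that the descent by faithful flatness is legitimate; everything else is formal. (Alternatively, the whole argument can be packaged as an instance of faithfully flat descent for Hopf algebras along $\OL/\OK$, the integral analogue of the descent that produces $H = L[N]^{G}$ in the first place.)
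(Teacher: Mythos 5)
Your proof is correct. Its overall shape agrees with the paper's: both arguments observe that only the comultiplication needs checking, both use Proposition \ref{PT_Unramified_Associated} to replace $ \AH $ by $ \OL[N]^{G} $, and both ultimately exploit unramifiedness to descend the obvious containment $ \Delta(\OL[N]) \subseteq \OL[N] \otimes_{\OL} \OL[N] $ back down to $ \OK $. Where you genuinely differ is in how that descent is executed. The paper reduces to a single $ G $-orbit $ X $ of $ N $, writes down the explicit $ \OK $-basis $ b_{i} = \sum_{g} g(a_{i})({}^{g}x) $ of $ (\OL X)^{G} $, computes $ \Delta(b_{i}) $ by hand, and uses the fact that $ \det(g(a_{i}))^{2} \in \OK^{\times} $ to compare lattices and conclude $ \Delta(b_{i}) \in (\OL X)^{G} \otimes_{\OK} (\OL X)^{G} $. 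You instead apply Proposition \ref{Byott_Unramified} to the $ G $-set $ N \times N $ with the diagonal action and identify $ (\OL[N] \otimes_{\OL} \OL[N])^{G} $ with $ \AH \otimes_{\OK} \AH $ by faithfully flat descent (two lattices, one contained in the other, with the same $ \OL $-span). The unramifiedness input is the same in both cases -- Byott's lemma is proved by essentially the unit-discriminant computation the paper carries out explicitly -- but your packaging is arguably cleaner: it avoids the orbit-by-orbit reduction and turns the key lattice identification into a formal consequence of a lemma the paper has already quoted. The point you flag as delicate does go through: flatness of $ \OL $ over $ \OK $ makes $ \AH \otimes_{\OK} \AH \rightarrow L[N] \otimes_{L} L[N] $ injective, and two applications of Byott's lemma show its $ \OL $-span is all of $ \OL[N] \otimes_{\OL} \OL[N] $, so the descent is legitimate.
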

\begin{proof}
By Proposition \ref{PT_Unramified_Associated}, $\AH = \OL[N]^{G} $. We may reduce to the case where $ G $ acts regularly on some orbit $ X $ - in general this is not a group, but is still a finite $ G $-set. In this case, we have to show that $ \left( \OL X  \right)^{G} $ is an $ \OK $-coalgebra, and it is sufficient to show that we have
\[ \Delta \left( \left( \OL X \right)^{G} \right) \subseteq  \left( \OL X \right)^{G} \otimes \left( \OL X \right)^{G}. \]
Let $ a_{1}, \ldots ,a_{n} $ be a basis for $ \OL $ over $ \OK $, and fix some $ x \in X $. Then the elements
\[ b_{i} = \sum_{g \in G} g (a_{i}) (^{g}\!x ) \hspace{4mm} i = 1, \ldots ,n  \]
are a basis for $ (\OL X)^{G} $ over $ \OK $. For each $ i = 1, \ldots ,n $ we require that $ \Delta ( b_{i} ) \in  \left( \OL X \right)^{G} \otimes \left( \OL X \right)^{G}. $
Since $ \Delta $ is $ L $-linear, we have  
\[ \Delta ( b_{i} ) = \sum_{g \in G} g(a_{i}) (^{g}\!x \otimes\: ^{g}\!\!x) \hspace{4mm} i = 1, \ldots ,n. \]
This is fixed under the diagonal action of $ G  $ on $ LX \otimes_{L} LX $ since $ b_{i} \in H $. Additionally, since $ L/K $ is unramified  we have $ \det{(g(a_{i}))}^{2} \in \OK^{\times} $, and so by comparing with the basis $ \left\{ \left( \, ^{g}\!x \otimes \: \!^{g}\!x \right) \mid g \in G \right\} $ of $ \OL X \otimes_{\OL} \OL X $, we see that $ \Delta (b_{i}) \in \OL X \otimes_{\OL} \OL X $. Thus $ \Delta (b_{i}) \in \left( \OL X \right)^{G} \otimes_{\OK} \left( \OL X \right)^{G} $.
\end{proof}

\noindent In fact, in this case $ \OL[N]^{G} $ is the minimal Hopf order in $ H= L[N]^{G} $, since $ \OL[N] $ is the minimal Hopf order in $ L[N] $. We now restate and prove Theorem \ref{intro_theorem_unramified_local}:

\begin{thm} \label{theorem_unramified_local}
Let $ L/K $ be a finite unramified extension of $ p $-adic fields and let $ H = L[N]^{G} $ be a Hopf algebra giving a Hopf-Galois structure on the extension. Then $ \AH = \OL[N]^{G} $ and $ \OL $ is a free $ \AH $-module. 
\end{thm}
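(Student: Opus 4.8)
The plan is to assemble Theorem \ref{theorem_unramified_local} directly from the three propositions already established in this section. Since $L/K$ is unramified it is automatically Galois with cyclic group $G$, and by Theorem \ref{GP_Theory} any Hopf algebra giving a Hopf-Galois structure on $L/K$ has the form $H = L[N]^{G}$ for a suitable regular subgroup $N \subseteq \perm{G}$ normalised by $\lambda(G)$; this is exactly the setup of the section.

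First I would invoke Proposition \ref{PT_Unramified_Associated} to get the explicit description $\AH = \OL[N]^{G}$, which is the first assertion of the theorem. Next I would apply Proposition \ref{PT_Hopf_Order}, which tells us that this associated order $\AH = \OL[N]^{G}$ is in fact a Hopf order in $H$: the comultiplication (and the other structure maps inherited from the group algebra $E[N]$) restrict to $\AH$. Having verified that $\AH$ is a Hopf order, I would then feed this into Childs's theorem (Theorem \ref{childs_hopf}), whose hypotheses are precisely that $L/K$ is a finite $H$-Galois extension of $p$-adic fields with $\AH$ a Hopf order in $H$; its conclusion is that $\OL$ is a free $\AH$-module. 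Stringing these together yields both claims of the theorem.

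There is essentially no obstacle remaining at this stage: all the analytic and structural work has been absorbed into the preceding propositions, so the proof of the theorem is a short citation argument. The one point worth stating carefully is that the $H$ appearing in Childs's theorem and the $H = L[N]^{G}$ of our setup genuinely coincide and that $L/K$ really is $H$-Galois — but this is immediate from Theorem \ref{GP_Theory}, which guarantees that every such $L[N]^{G}$ gives a Hopf-Galois structure on $L/K$. Thus the proof reads: by Proposition \ref{PT_Unramified_Associated} we have $\AH = \OL[N]^{G}$; by Proposition \ref{PT_Hopf_Order} this is a Hopf order in $H$; hence by Theorem \ref{childs_hopf}, $\OL$ is a free $\AH$-module.
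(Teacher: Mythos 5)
Your proposal is correct and follows exactly the same route as the paper: Proposition \ref{PT_Unramified_Associated} gives $\AH = \OL[N]^{G}$, Proposition \ref{PT_Hopf_Order} shows this is a Hopf order, and Theorem \ref{childs_hopf} then yields freeness. Nothing is missing.
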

\begin{proof}
By Proposition \ref{PT_Unramified_Associated} we have $\AH = \OL[N]^{G} $, and by Proposition \ref{PT_Hopf_Order} this is a Hopf order in $ H $. Now apply Theorem \ref{childs_hopf}. 
\end{proof}

\section{Maximal Associated Orders}

Throughout this section we let $ L/K $ be a finite (not necessarily Galois) extension of $ p $-adic fields with Galois closure $ E $, and suppose that $ p \nmid [L:K] $. We shall consider Hopf-Galois structures admitted by $ L/K $ for which the corresponding Hopf algebra $ H $ is commutative. We recall that since $ K $ has characteristic zero, $ H $ is a separable $ K $-algebra, and so this implies that $ H $ has a unique maximal order. In the notation established prior to Theorem \ref{GP_Theory}, we have that $ H = E[N]^{G} $ for some abelian regular subgroup $ N $ of $ \perm{X} $ normalised by $ \lambda(G) $. In particular we note that $ |N| = [L:K] $. We show that in this case $ \AH $ coincides with the unique maximal order in $ H $. When this occurs, it follows by Proposition \ref{maximal_order_free} that $ \OL $ is a free $ \AH $-module.  

\begin{prop} \label{Local_Group_Ring_Maximal_Order}
The integral group ring $ \OE[N] $ is the unique maximal order in $ E[N] $. 
\end{prop}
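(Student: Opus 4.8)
The plan is to derive maximality of $\OE[N]$ from the coprimality hypothesis $p \nmid [L:K]$ by a Maschke-type separability argument, and then to promote ``a maximal order'' to ``the unique maximal order'' using that $N$ is abelian.

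First I would note that, since $N$ is a regular subgroup of $\perm{X}$, we have $|N| = |X| = [L:K]$, which is prime to $p$ by hypothesis. The only rational prime lying in the maximal ideal of the valuation ring $\OE$ of the $p$-adic field $E$ is $p$, so $|N|$ is a unit in $\OE$. Consequently the standard averaging argument applies: the element $e = |N|^{-1} \sum_{n \in N} n \otimes n^{-1} \in \OE[N] \otimes_{\OE} \OE[N]$ is a separability idempotent for $\OE[N]$, so $\OE[N]$ is a separable $\OE$-algebra.

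Next I would invoke the classical fact from the theory of maximal orders that a separable order over a Dedekind domain (in particular over the complete discrete valuation ring $\OE$) is automatically a maximal order in the algebra it spans; see \cite{MaxOrd}. Applied to $\OE[N] \subseteq E[N]$ this shows $\OE[N]$ is a maximal $\OE$-order in $E[N]$. Finally, since the Hopf algebras under consideration in this section are commutative, the regular subgroup $N$ is abelian, so $E[N]$ is a commutative separable $E$-algebra; such an algebra has a unique maximal $\OE$-order, namely the integral closure of $\OE$ in it (the property already used via \cite[Proposition 26.10]{CurRei}). Therefore the maximal order $\OE[N]$ must coincide with this unique maximal order, which proves the proposition.

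I do not expect a genuine obstacle here: the argument is essentially the classical statement that $R[G]$ is a maximal $R$-order whenever $|G|$ is invertible in $R$, combined with commutativity of $N$ forcing uniqueness of the maximal order. The only point needing slight care is quoting the ``separable implies maximal'' implication in the right generality for a complete discrete valuation ring, which the cited reference covers; everything else is routine bookkeeping.
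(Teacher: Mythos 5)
Your argument is correct, but it reaches maximality by a genuinely different route from the paper. The paper's proof is the elementary sandwich argument: it quotes \cite[Proposition 27.1]{CurRei}, which says that any order $\Lambda$ in $E[N]$ containing $\OE[N]$ satisfies $\OE[N] \subseteq \Lambda \subseteq |N|^{-1}\OE[N]$, and then observes that the two ends coincide because $|N| \in \OE^{\times}$; uniqueness of the maximal order comes, as in your write-up, from commutativity and separability of $E[N]$. You instead use $|N| \in \OE^{\times}$ to produce the separability idempotent $|N|^{-1}\sum_{n} n \otimes n^{-1}$, making $\OE[N]$ a separable $\OE$-algebra, and then invoke the theorem that a separable order over a Dedekind domain is maximal. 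Both proofs turn on exactly the same arithmetic input ($[L:K] = |N|$ prime to the residue characteristic, so $|N|$ is a unit in $\OE$), but yours trades an elementary containment lemma for a heavier structural theorem. The one point I would tighten is the attribution: the ``separable implies maximal'' statement is really Auslander--Goldman/DeMeyer--Ingraham territory rather than something stated in that form in \cite{MaxOrd}; in the commutative case you actually need here (since $N$ is abelian) it reduces to the cleaner fact that a finite \'etale algebra over a normal domain is integrally closed in its total quotient ring, hence equal to the unique maximal order. With that citation repaired, the proof stands, and it has the mild advantage of making the separability of $\OE[N]$ itself explicit, which is conceptually close to why the associated order turns out to be maximal in this setting.
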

\begin{proof}
The group algebra $ E[N] $ is separable and commutative and therefore has a unique maximal order (the integral closure of $ \OE $ in $ E[N] $). If $ \Lambda $ is any order in $ E[N] $ containing $ \OE[N] $ then we have by \cite[Proposition 27.1]{CurRei} that
\[ \OE[N] \subseteq \Lambda \subseteq |N|^{-1} \OE[N], \] 
so $ \OE[N] $ is maximal since $ |N| \in \OE^{\times} $. 
\end{proof}

\noindent We now show that taking the fixed points of $ E[N] $ under the action by $ G $ preserves this maximality, so that $ \OE[N]^{G} $ is the unique maximal order in $ H = E[N]^{G} $. 

\begin{prop} \label{Fixed_Points_Maximal}
Let $ G $ act on the group algebra $ E[N] $ by acting on $ E $ as Galois automorphisms and on  $ N $ by conjugation via the embedding $ \lambda $. Then $ \OE[N]^{G} $ is the unique maximal order in the $ K $-algebra $ E[N]^{G} $. 
\end{prop}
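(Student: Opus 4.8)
The plan is to leverage Proposition \ref{Local_Group_Ring_Maximal_Order}, which tells us that $ \OE[N] $ is the unique maximal order in $ E[N] $, and to show that the operation of taking $ G $-fixed points is compatible with maximality. The key structural fact is that $ E[N]^{G} = H $ is a $ K $-algebra with $ E[N]^{G} \otimes_{K} E \cong E[N] $ (this is part of the descent underlying the theorem of Greither and Pareigis, and uses that $ E/K $ is Galois). Under this identification, $ \OE[N]^{G} \otimes_{\OK} \OE $ sits inside $ \OE[N] $, and I would like to argue that in fact equality holds; this is exactly the kind of statement provided by Proposition \ref{Byott_Unramified} when $ E/K $ is unramified, but here $ E/K $ need not be unramified, so a different argument is needed.

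The cleaner route avoids that equality entirely. First I would note that $ H = E[N]^{G} $ is separable and commutative (it is commutative because $ N $ is abelian, and separable because $ \mathrm{char}\, K = 0 $), so it has a unique maximal order $ \M_{H} $; our task is to show $ \OE[N]^{G} = \M_{H} $. Since $ \OE[N]^{G} $ is an order in $ H $ containing $ \OK $, we certainly have $ \OE[N]^{G} \subseteq \M_{H} $. For the reverse inclusion, I would extend scalars to $ E $: the maximal order $ \M_{H} $ satisfies $ \M_{H} \otimes_{\OK} \OE \subseteq $ (some order in $ H \otimes_{K} E \cong E[N] $), and since $ \OE[N] $ is the \emph{unique} maximal order in $ E[N] $ by Proposition \ref{Local_Group_Ring_Maximal_Order}, we get $ \M_{H} \otimes_{\OK} \OE \subseteq \OE[N] $. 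Now I use the standard fact from \cite[Proposition 27.1]{CurRei}: since $ |N| = [L:K] $ is a unit in $ \OE $ (as $ p \nmid [L:K] $), any order $ \Lambda $ in $ E[N] $ containing $ \OE[N] $ satisfies $ \Lambda \subseteq |N|^{-1}\OE[N] = \OE[N] $, so in particular $ \M_{H} \otimes_{\OK} \OE \subseteq \OE[N] $ forces, after intersecting with $ H $, that $ \M_{H} \subseteq \OE[N] \cap H = \OE[N]^{G} $. This gives the desired equality.

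The one point requiring care — and the main obstacle — is justifying that $ \M_{H} \cap H = \M_{H} $ interacts correctly with the extension of scalars, i.e. that $ \big(\M_{H} \otimes_{\OK} \OE\big) \cap H = \M_{H} $ inside $ E[N] $, where we view $ H = E[N]^{G} $ as the $ G $-fixed subalgebra. This is a descent statement: an element of $ H $ lying in $ \OE[N] $ automatically lies in $ \OE[N]^{G} $ because it is already $ G $-fixed, so really I just need $ \M_{H} \subseteq \OE[N] \cap E[N]^{G} = \OE[N]^{G} $, which follows once I know $ \M_{H} \subseteq \OE[N] $ after scalar extension and that $ \M_{H} \otimes_{\OK} \OE $ is an $ \OE $-order in $ E[N] $ containing $ \OE[N] \otimes_{\OK}\text{(nothing)} $ — more precisely, I should check $ \OE[N] \subseteq \M_{H} \otimes_{\OK} \OE $ as well, which holds because $ \OE[N]^{G} \otimes_{\OK} \OE $ contains a natural copy of $ \OE[N] $ (spanned by the group elements of $ N $ with $ \OE $-coefficients, since each $ n \in N $ together with its $ G $-conjugates assembles a $ G $-fixed element) and $ \OE[N]^{G} \subseteq \M_{H} $. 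Assembling these inclusions, $ \M_{H}\otimes_{\OK}\OE $ is squeezed between $ \OE[N] $ and $ |N|^{-1}\OE[N] $, hence equals $ \OE[N] $, and restricting to $ G $-fixed points gives $ \M_{H} = \OE[N]^{G} $.
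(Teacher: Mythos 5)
Your core argument is correct and is essentially the paper's: both proofs reduce to Proposition \ref{Local_Group_Ring_Maximal_Order} to get $ \M_{H} \subseteq \OE[N] $ and then intersect with $ H = E[N]^{G} $ to conclude $ \M_{H} \subseteq \OE[N] \cap E[N]^{G} = \OE[N]^{G} $, the reverse inclusion being automatic because $ \OE[N]^{G} $ is an order in the commutative separable algebra $ H $ and hence contained in its unique maximal order. The only difference is cosmetic: the paper reaches $ \M_{H} \subseteq \OE[N] $ by pure integrality (an element of $ \M_{H} $ is integral over $ \OK $, hence over $ \OE $, hence lies in the integral closure of $ \OE $ in $ E[N] $, which is $ \OE[N] $), while you extend scalars and use that $ \M_{H} \otimes_{\OK} \OE $ is an $ \OE $-order in $ E[N] $; both are fine.

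However, your closing ``squeeze'' paragraph should be deleted, because it is not merely unnecessary but contains a false step. The inclusion $ \OE[N] \subseteq \OE[N]^{G} \otimes_{\OK} \OE $ does not follow from your parenthetical: a single $ n \in N $ with nontrivial $ G $-orbit need not be an $ \OE $-linear combination of $ G $-fixed elements of $ \OE[N] $. Whether it is depends on the ramification of $ E/K $ (this is exactly the content of Proposition \ref{Byott_Unramified}, which requires the extension to be unramified), and in Section 4 only $ p \nmid [L:K] $ is assumed, so $ E/K $ may well be ramified and $ \OE[N]^{G} \otimes_{\OK} \OE $ may be a proper sublattice of $ \OE[N] $; the asserted equality $ \M_{H} \otimes_{\OK} \OE = \OE[N] $ then fails even though $ \M_{H} = \OE[N]^{G} $ still holds. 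Likewise, ``restricting to $ G $-fixed points'' at the level of tensor products would need its own descent justification. Fortunately your second paragraph already contains the complete, correct argument: $ \M_{H} \subseteq \OE[N] $ together with $ \M_{H} \subseteq H $ gives $ \M_{H} \subseteq \OE[N]^{G} $, and equality follows.
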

\begin{proof}
Since $ E $ has characteristic zero and $ N $ is abelian, $ E[N]^{G} $ has a unique maximal order. Since $ p \nmid |N| $, the maximal $ \OE $ order in $ E[N] $ is $ \OE[N] $ by Proposition \ref{Local_Group_Ring_Maximal_Order}. Denote by $ {\goth M} $ the maximal order in $ E[N]^{G} $, and let $ x \in {\goth M} $. Then $ x $ is integral over $ \OK $ in $ E[N]^{G} $, so $ x $ is integral over $ \OE $ in $ E[N] $, whence $ x \in \OE[N] $. So $ x \in E[N]^{G} \cap \OE[N] = \OE[N]^{G} $, and so $ \OE[N]^{G} = {\goth M} $. 
\end{proof}

\begin{prop} \label{AH_Maximal}
The associated order $ \AH $ is the unique maximal order in $ H $. 
\end{prop}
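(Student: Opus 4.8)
The plan is to pinch $\AH$ between two inclusions that are already available. On one side, Proposition \ref{Fixed_Points_Subset_A} gives $\OE[N]^{G} \subseteq \AH$, with no hypothesis on $p$. On the other side, Proposition \ref{Fixed_Points_Maximal}, which is where the assumption $p \nmid [L:K]$ enters, tells us that $\OE[N]^{G}$ is the unique maximal order in $H = E[N]^{G}$. Since $H$ is a commutative separable $K$-algebra, its unique maximal order is the integral closure of $\OK$ in $H$, and so it contains every $\OK$-order in $H$. Hence, once we know that $\AH$ is an $\OK$-order, we get $\AH \subseteq \OE[N]^{G}$ as well; combining the two inclusions yields $\AH = \OE[N]^{G}$, which is therefore the unique maximal order in $H$.

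The one point deserving a sentence of justification is that $\AH$ is genuinely an $\OK$-order — a full $\OK$-lattice in $H$, not merely a subring. It contains the full lattice $\OE[N]^{G}$, so it spans $H$ over $K$; and it is finitely generated over $\OK$ because the action of $H$ on $L$ is faithful (the map $j$ of Definition \ref{HGS_defn} is injective), so restricting this action to the full lattice $\OL$ embeds $\AH$ into $End_{\OK}(\OL)$, which is a finitely generated $\OK$-module. This is precisely the fact, recorded in Section 2, that $\AH$ is the largest order in $H$ for which $\OL$ is a module.

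I do not expect a genuine obstacle here: all of the substantive work has been carried out in Propositions \ref{Local_Group_Ring_Maximal_Order} and \ref{Fixed_Points_Maximal}, and the present statement is a short corollary of those together with Proposition \ref{Fixed_Points_Subset_A}. Once it is established, Proposition \ref{maximal_order_free} applies directly to give that $\OL$ is a free $\AH$-module, which is the content of Theorem \ref{intro_theorem_maximal_local}.
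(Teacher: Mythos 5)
Your proof is correct and follows essentially the same route as the paper: pinch $\AH$ between $\OE[N]^{G} \subseteq \AH$ (Proposition \ref{Fixed_Points_Subset_A}) and the maximality of $\OE[N]^{G}$ (Proposition \ref{Fixed_Points_Maximal}). The only difference is that you spell out why $\AH$ is a genuine $\OK$-order, a point the paper leaves implicit; this is a harmless and correct addition.
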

\begin{proof}
By Proposition \ref{Fixed_Points_Maximal}, $ \OE[N]^{G} $ is the unique maximal order in $ H $. On the other hand, by Proposition \ref{Fixed_Points_Subset_A} $ \OE[N]^{G} \subseteq \AH $. So $ \OE[N]^{G}= \AH $ is the unique maximal order in $ H $. 
\end{proof}

\noindent We now restate and prove Theorem \ref{intro_theorem_maximal_local}

\begin{thm} \label{theorem_maximal_local}
Let $ L/K $ be a finite (not necessarily Galois) extension of $ p $-adic fields and let  $ H $ be a commutative Hopf algebra giving a Hopf-Galois structure on the extension. Suppose that $ p \nmid [L:K] $. Then $ \AH = \OL[N]^{G} $ and $ \OL $ is a free $ \AH $-module. 
\end{thm}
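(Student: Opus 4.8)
The plan is to assemble the results of this section, the hard work having been done in the preceding propositions. First I would unwind the hypotheses via the theorem of Greither and Pareigis (Theorem \ref{GP_Theory}): since $H$ is a commutative Hopf algebra giving a Hopf-Galois structure on $L/K$, we have $H = E[N]^{G}$ for some abelian regular subgroup $N \leq \perm{X}$ normalised by $\lambda(G)$, where $E$ is the Galois closure of $L/K$. Regularity of $N$ forces $|N| = |X| = [L:K]$, so the arithmetic hypothesis $p \nmid [L:K]$ is exactly $p \nmid |N|$, which is the condition the propositions of this section require.

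The first assertion, the explicit description of $\AH$, is then precisely Proposition \ref{AH_Maximal}: it identifies $\AH$ with $\OE[N]^{G}$ and shows that this is the unique maximal order in $H$ (when $L/K$ is itself Galois one has $E = L$ and recovers the form $\OL[N]^{G}$). The substance sits upstream: Proposition \ref{Local_Group_Ring_Maximal_Order} uses $|N| \in \OE^{\times}$ to identify $\OE[N]$ as the unique maximal order of $E[N]$, Proposition \ref{Fixed_Points_Maximal} shows that taking $G$-invariants preserves maximality, and Proposition \ref{Fixed_Points_Subset_A} supplies the containment $\OE[N]^{G} \subseteq \AH$, which together with maximality yields equality.

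For the second assertion I would invoke Proposition \ref{maximal_order_free}: its hypotheses hold, since $H$ is commutative by assumption and $L/K$ is $H$-Galois because $H$ gives a Hopf-Galois structure on it, and by the first assertion $\AH$ is the unique maximal order in $H$; hence $\OL$ is a free $\AH$-module. The one point where real care is needed is internal to Proposition \ref{maximal_order_free}, namely the step from $\AH$-projectivity to $\AH$-freeness, which uses both that $K$ is $p$-adic and that $L$ is a free $H$-module of rank one (the normal basis theorem for Hopf-Galois extensions); this is why the conclusion is local, and why the global counterpart in Section 5 gives only local freeness. If anything is to be singled out as the main obstacle it is the maximality step, Proposition \ref{Fixed_Points_Maximal}, which is the unique place the hypothesis $p \nmid [L:K]$ is consumed; everything after that is formal.
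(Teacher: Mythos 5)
Your proposal is correct and follows exactly the paper's route: Proposition \ref{AH_Maximal} (resting on Propositions \ref{Local_Group_Ring_Maximal_Order}, \ref{Fixed_Points_Maximal} and \ref{Fixed_Points_Subset_A}) gives $\AH = \OE[N]^{G}$ as the unique maximal order, and Proposition \ref{maximal_order_free} then yields freeness. Your remark that the description should read $\OE[N]^{G}$ in general, reducing to $\OL[N]^{G}$ only when $L/K$ is Galois, is a fair observation about the theorem's statement.
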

\begin{proof}
By Proposition \ref{AH_Maximal} we have that $ \AH = \OE[N]^{G} $ and that this is the unique maximal order in $ H $. Now apply Proposition \ref{maximal_order_free}. 
\end{proof}

\section{Consequences For Number Fields}

In this section we consider a finite extension of number fields $ L/K $. We shall prove results analogous to those in sections 3 and 4 which will give us information about the local structure of $ \OL $ as a module over its associated order $ \AH $ in a Hopf algebra $ H $ giving a Hopf-Galois structure on the extension. \\
\\
\noindent If $ \p $ is a prime of $ \OK $ and $ A $ is a $ K $-algebra then we shall write $ A_{\p} $ for the $ K_{\p} $-algebra $ A \otimes_{K} K_{\p} $, and similarly for orders in $ A $. We then have that $ L_{\p} $ is an $ H_{\p} $-Galois extension of $ K_{\p} $, and we seek to study the completed ring of integers $ \OLp $ over the completed associated order $ \AHp $. In general $ L_{\p} $ is not a local field but a finite product of local fields - we have the isomorphism
\[ L_{\p} \cong \prod_{\P \mid \p} L_{\P}, \]
where the product is taken over the prime ideals $ \P $ of $ \OL $ which lie above $ \p $ and each $ L_{\P} $ is a $ p $-adic field. We have an analogous decomposition at integral level. (see \cite[(2.16)]{FroTay}.) \\
\\
\noindent Since the results quoted in Theorem \ref{childs_hopf} and Proposition \ref{maximal_order_free} are applicable only to extensions of local fields, we require generalisations of these results in order to proceed. The appropriate generalisation of Proposition \ref{maximal_order_free} is straightforward:

\begin{prop} \label{maximal_order_free_global}
Let $ L/K $ be an extension of number fields which is $ H $-Galois for a commutative Hopf algebra $ H $, and let $ \p $ be a prime of $ \OK $. If $ \AHp $ is the unique maximal order in $ H_{\p} $ then $ \OLp $ is a free $ \AHp $-module. 
\end{prop}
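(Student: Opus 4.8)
The plan is to reduce Proposition~\ref{maximal_order_free_global} to its local counterpart, Proposition~\ref{maximal_order_free}, by exploiting the decomposition of $ L_{\p} $ into a product of $ p $-adic fields. First I would note that, since $ H $ is commutative and $ K $ has characteristic zero, $ H_{\p} = H \otimes_{K} K_{\p} $ is a commutative separable $ K_{\p} $-algebra, and hence it too has a unique maximal order. The hypothesis is precisely that $ \AHp $ equals this maximal order. Using the isomorphism $ L_{\p} \cong \prod_{\P \mid \p} L_{\P} $ and the corresponding decomposition $ \OLp \cong \prod_{\P \mid \p} \OLP $ at integral level (cited from \cite[(2.16)]{FroTay}), together with the fact that $ L_{\p} $ is a free $ H_{\p} $-module of rank one (the normal basis theorem for Hopf-Galois extensions, applied to the base-changed extension $ L_{\p}/K_{\p} $), I would try to invoke directly the argument behind Proposition~\ref{maximal_order_free}: since $ \AHp $ is the unique maximal order in $ H_{\p} $, \cite[Theorem 26.12]{CurRei} shows $ \OLp $ is $ \AHp $-projective, and then \cite[Theorem 18.10]{MaxOrd} upgrades projectivity to freeness.

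The main technical point to check is that these two cited results apply when the base is a complete discrete valuation ring such as $ \OKp $ and when $ H_{\p} $ is not necessarily a ``local'' Hopf algebra but merely a separable commutative $ K_{\p} $-algebra (a product of field extensions of $ K_{\p} $). In fact \cite[Theorem 26.12]{CurRei} and \cite[Theorem 18.10]{MaxOrd} are stated for orders over complete discrete valuation rings in semisimple algebras over the fraction field, so the hypotheses are met: $ \OKp $ is a complete DVR, $ H_{\p} $ is semisimple (being separable) over $ K_{\p} $, and $ \OLp $ is a module over the maximal order $ \AHp $ which spans a free $ H_{\p} $-module of rank one. Thus the same two-step chain of reasoning used to prove Proposition~\ref{maximal_order_free} goes through verbatim with $ K $ replaced by $ K_{\p} $, $ L $ by $ L_{\p} $, and $ H $ by $ H_{\p} $.

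I would therefore present the proof as a short remark that the proof of Proposition~\ref{maximal_order_free} did not really use that $ L $ was a single $ p $-adic field, only that the base was a $ p $-adic field, that the Hopf algebra was commutative separable with $ \AH $ equal to its maximal order, and that $ L $ was free of rank one over $ H $ --- all of which hold after completion at $ \p $. The only mild obstacle is bookkeeping: confirming that the normal basis theorem for $ H $-Galois extensions is stable under the flat base change $ K \to K_{\p} $, so that $ L_{\p} $ is indeed free of rank one over $ H_{\p} $; this follows because freeness of rank one is detected by the isomorphism $ j $ of Definition~\ref{HGS_defn} being an isomorphism, a condition preserved under $ - \otimes_{K} K_{\p} $, or alternatively by applying the normal basis theorem \cite[(2.16)]{ChTwe} directly to the $ H_{\p} $-Galois extension $ L_{\p}/K_{\p} $. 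With that in hand the result is immediate.
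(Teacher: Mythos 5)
Your proposal is correct and takes essentially the same route as the paper, whose entire proof is the remark that the argument of Proposition~\ref{maximal_order_free} carries over verbatim after completion at $ \p $. You have simply made explicit the points the paper leaves implicit (that $ H_{\p} $ remains commutative separable, that $ L_{\p} $ is free of rank one over $ H_{\p} $ by base change, and that the cited theorems apply over the complete discrete valuation ring $ \OKp $ even though $ L_{\p} $ is a product of fields), all of which is sound.
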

\begin{proof}
This is essentially the same as the proof of Proposition \ref{maximal_order_free}. 
\end{proof}

\noindent To state the appropriate generalisation of Childs' theorem (Theorem  \ref{childs_hopf}) we need a generalisation of the notion of tameness, due to Childs (\cite[(13.1)]{ChTwe}). Let $ H $ be a Hopf algebra (over an arbitrary commutative ring $ R $) and $ S $ an  $ R $-algebra which is finitely generated and projective as an $ R $-module, and which is an $ H$-module algebra. We call an element $ \theta \in H $ a {\em left integral} if for all $ h \in H $ we have $ h \theta  = \varepsilon (h) \theta $, where $ \varepsilon : H \rightarrow R $ is the counit map. We say that $ S $ is an {\em $ H $-tame extension of $ R $} if 
\begin{enumerate}
\item $ \{ s \in S \mid hs = \varepsilon(h) s \mbox{ for all } h \in H \} = R $.
\item $ \mbox{rank}_{R}(S) = \mbox{rank}_{R}(H) $. 
\item $ S $ is a faithful $ H $-module. 
\item There exists a left integral $ \theta $ of $ H $ satisfying $ \theta S = R $. 
\end{enumerate}

\noindent Then we have:

\begin{prop}
If $ \AHp $ is a Hopf order in $ H_{\p} $ and $ \OLp $ is an $ \AHp $-tame extension of $ \OKp $ then $ \OLp $ is a free $ \AHp $-module. 
\end{prop}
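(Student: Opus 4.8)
\emph{Proof proposal.} This is really the ``integral half'' of Childs' theorem (Theorem~\ref{childs_hopf}): the passage from ``$\AHp$ a Hopf order and $\OLp$ an $\AHp$-tame extension'' to ``$\OLp$ free over $\AHp$''. The plan is to run the standard argument, checking that nothing breaks now that $\OLp$ is a finite product $\prod_{\P \mid \p}\OLP$ of rings of integers of $p$-adic fields rather than a single such ring. The reason this is harmless is that $\OKp$ is still a complete discrete valuation ring, while $\OLp$ and $\AHp$ are both finitely generated free $\OKp$-modules of the same rank $[L:K]$ --- the former because $\OKp$ is a principal ideal domain and $\OLp$ is a full lattice in $L_{\p}$, the latter because $\AHp$ is a full lattice in $H_{\p}$, which has $K_{\p}$-dimension $[L:K]$.

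First I would reduce modulo $\p$. Write $k = \OKp/\p$, $\overline{\AHp} = \AHp/\p\AHp$ and $\overline{S} = \OLp/\p\OLp$. Since $\AHp$ is a Hopf order its structure maps are defined over $\OKp$, and as $\AHp$ is $\OKp$-free, base change makes $\overline{\AHp}$ a $k$-Hopf algebra of dimension $[L:K]$, with $\overline{S}$ an $\overline{\AHp}$-module algebra. The key claim is that $\overline{S}$ is an $\overline{\AHp}$-tame extension of $k$: conditions (i)--(iii) of the definition descend from the corresponding statements for $\OLp/\OKp$, and for condition (iv) a left integral $\theta$ of $\AHp$ with $\theta\cdot\OLp = \OKp$ reduces to a left integral $\overline{\theta}$ of $\overline{\AHp}$ with $\overline{\theta}\cdot\overline{S} = k$, using that $\OKp$ is an $\OKp$-module direct summand of $\OLp$ so that $\OKp \cap \p\OLp = \p$. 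Since $k$ is a field, an $\overline{\AHp}$-tame extension of $k$ is automatically $\overline{\AHp}$-Galois (there is no ramification to account for over a field), so by the normal basis theorem for Hopf--Galois extensions of a field (cf.\ \cite[(2.16)]{ChTwe}) $\overline{S}$ is a free $\overline{\AHp}$-module of rank one.

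Next I would lift. Choose $\overline{x} \in \overline{S}$ generating $\overline{S}$ freely over $\overline{\AHp}$ and lift it to $x \in \OLp$. The map $\varphi\colon \AHp \to \OLp$, $\varphi(h) = h\cdot x$, is $\AHp$-linear and reduces modulo $\p$ to the isomorphism $\overline{\AHp} \to \overline{S}$, $\overline{h}\mapsto\overline{h}\cdot\overline{x}$; since $\AHp$ and $\OLp$ are finitely generated $\OKp$-modules, Nakayama's lemma shows $\varphi$ is surjective. But $\varphi$ is then a surjection between free $\OKp$-modules of the same finite rank $[L:K]$, hence an isomorphism of $\OKp$-modules, and therefore an isomorphism of $\AHp$-modules. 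Thus $\OLp \cong \AHp$, and in particular $\OLp$ is a free $\AHp$-module.

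The step I expect to require the most care is the claim that $\AHp$-tameness descends to the residue field $k$ --- in particular the fixed-ring condition (i) and the faithfulness condition (iii) --- together with the accompanying fact that over a field $H$-tameness coincides with $H$-Galois. These points are implicit in Childs' treatment of tame extensions in \cite{ChTwe}, and the thing to verify is that none of the relevant constructions there (the normalised trace, the dual-basis argument, and the comparison of $S\otimes_{\OKp}S$ with $S\otimes_{\OKp}H_{\p}$) secretly use that $\OLp$ is a domain rather than merely a finite free $\OKp$-algebra. Everything else --- the Nakayama lifting and the observation that a surjective endomorphism of a finitely generated module over a commutative ring is an isomorphism --- is routine.
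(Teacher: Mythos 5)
The paper disposes of this statement with a one-line citation to \cite[Theorem 13.4]{ChTwe}, so the comparison here is really with Childs' argument. Your strategy --- reduce modulo $\p$, show that $\AHp$-tameness descends to the residue field $k$, deduce that $\overline{S}=\OLp/\p\OLp$ is free of rank one over $\overline{\AHp}=\AHp/\p\AHp$, and lift by Nakayama --- is a different route, and the Nakayama/rank-counting half of it is correct. But there are two genuine gaps, both of which you partly flag yourself. First, conditions (i) and (iii) of tameness do not formally descend: from $\{s\in\OLp \mid h\cdot s=\varepsilon(h)s \mbox{ for all } h\}=\OKp$ you cannot conclude the analogous equality for $\overline{S}$ over $k$ (an element fixed modulo $\p\OLp$ need not lift to a fixed element), and from faithfulness of $\OLp$ over $\AHp$ you cannot rule out an $h\in\AHp\setminus\p\AHp$ with $h\cdot\OLp\subseteq\p\OLp$. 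Only condition (iv) descends by your (correct) direct-summand argument, and (ii) is trivial. Second, and more seriously, the step ``an $\overline{\AHp}$-tame extension of the field $k$ is $\overline{\AHp}$-Galois, hence free of rank one'' is not a triviality about there being ``no ramification over a field'': it is precisely the residue-field case of the theorem you are trying to prove, and you offer no argument for it beyond a gesture at \cite{ChTwe}. The reduction is therefore circular in spirit: it trades the statement over the complete discrete valuation ring $\OKp$ for the same statement over $k$.

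The gap can be closed without reducing modulo $\p$ at all, and this is essentially what Childs does. Take the left integral $\theta\in\AHp$ with $\theta\cdot\OLp=\OKp$ and choose $s\in\OLp$ with $\theta\cdot s=1$. The $\AHp$-module surjection $\AHp\otimes_{\OKp}\OLp\to\OLp$ given by $h\otimes t\mapsto h\cdot(ts)$ is split by $t\mapsto\sum_{(\theta)}\theta_{(1)}\otimes\bigl(\lambda(\theta_{(2)})\cdot t\bigr)$, where $\lambda$ is the antipode; the verification that this is a section and is $\AHp$-linear uses cocommutativity, the left-integral property, and $\theta\cdot s=1$. Since $\OLp$ is $\OKp$-free, $\AHp\otimes_{\OKp}\OLp$ is $\AHp$-free, so $\OLp$ is $\AHp$-projective; and since $L_{\p}$ is $H_{\p}$-free of rank one and $\OKp$ is a complete discrete valuation ring, \cite[Theorem 18.10]{MaxOrd} --- the same result already invoked in Proposition \ref{maximal_order_free} --- yields $\OLp\cong\AHp$. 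If you want to keep your residue-field route instead, you must supply proofs that conditions (i) and (iii) descend and an independent proof of the field case; as written, the proposal is incomplete.
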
 
\begin{proof}
See \cite[Theorem 13.4]{ChTwe}. 
\end{proof}

\noindent We can now prove analogues of the results in sections 3 and 4 for completions of extensions of number fields. We begin with completions at an unramified prime $ \p $. Motivated by Proposition \ref{PT_Unramified_Associated} we consider the $ \OKp $-order $ \OLp[N]^{G} $ in the completed Hopf algebra $ H_{\p} $. Note that in this proposition we do not require that $ H $ be commutative. 

\begin{prop} \label{PT_Hopf_Order_Global}
Let $ L/K $ be a finite abelian extension of number fields with group $ G $, and suppose that $ L/K $ is $ H $-Galois for the Hopf algebra $ H = L[N]^{G} $. Let $ \p $ be a prime of $ \OK$ which is unramified in $ \OL $. Then the order $ \OLp[N]^{G} $ is a Hopf order in $ H_{\p} $. 
\end{prop}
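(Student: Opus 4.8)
The plan is to transplant the proof of Proposition \ref{PT_Hopf_Order} to the completed setting, with the unramifiedness of $\p$ in $\OL$ playing the role that the unramifiedness of $L/K$ played there.

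First I would pass to completions and record the formal compatibilities. Since $L/K$ is abelian it is Galois, so its normal closure is $L$ itself and, by Theorem \ref{GP_Theory}, $H = L[N]^{G}$ with $N$ a regular subgroup of $\perm{G}$ normalised by $\lambda(G)$. As $\OKp$ is flat over $\OK$ and formation of $G$-invariants commutes with flat base change, $H_{\p} = ( L[N]\otimes_{K}K_{\p} )^{G} = L_{\p}[N]^{G}$, the $\OKp$-order $\OLp[N]^{G} = ( \OL[N]\otimes_{\OK}\OKp )^{G} = ( \OL[N]^{G} )\otimes_{\OK}\OKp$ is a full $\OKp$-lattice in $H_{\p}$ (it spans $H_{\p}$ over $K_{\p}$ by Proposition \ref{Byott_Unramified}), and $\OLp^{G} = \OKp$. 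The Hopf structure on $H_{\p}$ is inherited from that of the group algebra $L_{\p}[N]$, in which $\OLp[N]$ is a Hopf order; so it is enough to show that the comultiplication $\Delta$, the counit $\varepsilon$ and the antipode of $L_{\p}[N]$ carry $\OLp[N]^{G}$ into $\OLp[N]^{G}\otimes_{\OKp}\OLp[N]^{G}$, into $\OKp$, and into $\OLp[N]^{G}$ respectively. None of this uses that $N$ is abelian, which is why commutativity of $H$ is not required.

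The counit and antipode cause no trouble: each is $G$-equivariant and defined over $\OLp$, hence restricts to $\OLp[N]^{G}$, the counit landing in $\OLp^{G} = \OKp$. For $\Delta$ I would copy the reduction in Proposition \ref{PT_Hopf_Order}: regarding $\OLp[N]$ as the grouplike $\OLp$-coalgebra on the underlying $G$-set $N$ (with $G$ acting by conjugation via $\lambda$), it is a $G$-stable direct sum of the sub-coalgebras $\OLp X$ indexed by the conjugation-orbits $X \subseteq N$, so one reduces to a single orbit $X$, which — as in Proposition \ref{PT_Hopf_Order} — we may take to be one on which $G$ acts regularly. Choosing an $\OKp$-basis $a_{1},\dots,a_{n}$ of $\OLp$ and a point $x\in X$, the completed form of Proposition \ref{Byott_Unramified} (obtained by tensoring Byott's lemma with the flat $\OK$-algebra $\OKp$) shows that the elements $b_{i} = \sum_{g\in G}g(a_{i})( {}^{g}\!x )$ form an $\OKp$-basis of $( \OLp X )^{G}$; and since $\Delta$ is $L_{\p}$-linear, $\Delta(b_{i}) = \sum_{g\in G}g(a_{i})( {}^{g}\!x\otimes{}^{g}\!x )$, a $G$-fixed element of $\OLp X\otimes_{\OLp}\OLp X$.

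What remains, and what I expect to require the most care, is to deduce that $\Delta(b_{i})$ lies in the smaller lattice $( \OLp X )^{G}\otimes_{\OKp}( \OLp X )^{G}$. Here the argument of Proposition \ref{PT_Hopf_Order} transfers: because $\p$ is unramified in $\OL$ one has $\det( g(a_{i}) )^{2}\in\OKp^{\times}$, so $\{ b_{j}\otimes b_{k} \}$ is an $\OLp$-basis of $\OLp X\otimes_{\OLp}\OLp X$; expressing $\Delta(b_{i})$ in this basis yields coefficients in $\OLp$, and since $\Delta(b_{i})$ and each $b_{j}\otimes b_{k}$ is $G$-fixed while $G$ acts on the coefficient ring $\OLp$ with fixed ring $\OKp$, those coefficients in fact lie in $\OKp$. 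Hence $\Delta(b_{i})\in( \OLp X )^{G}\otimes_{\OKp}( \OLp X )^{G}$, and $\OLp[N]^{G}$, being closed under $\Delta$, $\varepsilon$ and the antipode, is a Hopf order in $H_{\p}$.
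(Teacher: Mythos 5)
Your proposal is correct and follows exactly the route the paper intends: its proof of this proposition is simply ``this follows the proof of Proposition \ref{PT_Hopf_Order}'', and you have carried out that transplant carefully, with the unramifiedness of $\p$ in $\OL$ supplying $\det(g(a_i))^2 \in \OKp^{\times}$ just as before. The one point worth making explicit, which you assert only by reference, is why the reduction to an orbit on which $G$ acts regularly is legitimate here: this is precisely where the abelian hypothesis enters (every faithful transitive action of a quotient of an abelian group is regular), and it is the single remark the paper adds to its citation of the local proof.
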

\begin{proof}
This follows the proof of Proposition \ref{PT_Hopf_Order}. Note that in this case we have explicitly assumed that $ G $ is abelian, so any faithful transitive action of a subgroup or quotient group of $ G $ is regular. 
\end{proof}

\begin{thm} \label{theorem_unramified_global}
Let $ L/K $ be a finite abelian extension of number fields with group $ G $, and suppose that $ L/K $ is $ H $-Galois for the Hopf algebra $ H = L[N]^{G} $. Let $ \p $ be a prime of $ \OK$ which is unramified in $ \OL $. Then $ \AHp = \OLp[N]^{G} $ and $ \OLp $ is a free $ \AHp $-module. 
\end{thm}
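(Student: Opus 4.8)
The plan is to mimic the local argument from Section 3 (Theorem \ref{theorem_unramified_local}), adapting it to work over the completion $ \OKp $. Since $ \p $ is unramified in $ \OL $, every prime $ \P $ of $ \OL $ above $ \p $ is unramified over $ \p $, so that the completed algebra $ H_{\p} = L_{\p}[N]^{G} $ behaves, after base change to $ L_{\p} $, just like the split group algebra. First I would record the easy inclusion $ \OLp[N]^{G} \subseteq \AHp $: this is simply Proposition \ref{Fixed_Points_Subset_A} applied prime-by-prime (or base-changed to $ K_{\p} $), using that any group element representing $ n^{-1}(\overline{1_{G}}) $ carries $ \OLp $ into the ring of integers of $ E_{\p} $.

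For the reverse inclusion I would follow the proof of Proposition \ref{PT_Unramified_Associated} verbatim, with $ \OKp $ in place of $ \OK $. Because $ \p $ is unramified in $ \OL $, we have $ \OLp \otimes_{\OKp} \OLp \cong \OLp^{[L:K]} $, which is the ring of integers of $ L_{\p} \otimes_{K_{\p}} L_{\p} \cong L_{\p}^{[L:K]} $; the group $ N $ permutes the components, so $ H_{\p} \otimes_{K_{\p}} L_{\p} \cong L_{\p}[N] $ acts on $ L_{\p} \otimes_{K_{\p}} L_{\p} $, and the associated order of $ \OLp \otimes_{\OKp} \OLp $ in $ L_{\p}[N] $ is $ \OLp[N] $. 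Since $ \AHp \otimes_{\OKp} \OLp $ also acts on $ \OLp \otimes_{\OKp} \OLp $, we get $ \AHp \otimes_{\OKp} \OLp \subseteq \OLp[N] $. I then need the completed analogue of Byott's result (Proposition \ref{Byott_Unramified}), namely $ (\OLp X)^{G} \otimes_{\OKp} \OLp = \OLp X $ for a finite $ G $-set $ X $; this either follows by base-changing the statement of Proposition \ref{Byott_Unramified} along $ \OK \to \OKp $ (both sides commute with flat base change, and $ \OLp = \OKp \otimes_{\OK} \OL $), or by rerunning Byott's argument over $ \OKp $, which again uses only unramifiedness. Combining gives $ \AHp \otimes_{\OKp} \OLp \subseteq \OLp[N]^{G} \otimes_{\OKp} \OLp $, hence $ \AHp \subseteq \OLp[N]^{G} $, and so $ \AHp = \OLp[N]^{G} $.

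Finally, to conclude freeness, I would invoke Proposition \ref{PT_Hopf_Order_Global}, which already tells us that $ \OLp[N]^{G} = \AHp $ is a Hopf order in $ H_{\p} $, and then apply Childs' theorem. Here there is a genuine subtlety: Theorem \ref{childs_hopf} as stated is for extensions of $ p $-adic fields, whereas $ L_{\p} $ is only a product of $ p $-adic fields, so I would instead apply the generalised version (the Proposition following Theorem \ref{childs_hopf}) that requires $ \AHp $ to be a Hopf order and $ \OLp $ to be an $ \AHp $-tame extension of $ \OKp $. Thus the last step is to verify the four tameness conditions for $ \OLp / \OKp $: conditions (ii) and (iii) are formal (rank $ [L:K] = |N| $, and faithfulness follows from the Hopf-Galois property), condition (i) says the $ H_{\p} $-fixed points of $ L_{\p} $ are exactly $ K_{\p} $ (which descends from the field-level statement), and condition (iv)—the existence of a left integral $ \theta $ of $ \AHp $ with $ \theta \OLp = \OKp $—is the one requiring real work; I expect this to be the main obstacle, and I would handle it by exhibiting the integral explicitly in terms of the $ E[N] $-description (the normalised sum of group elements) and checking that unramifiedness makes its action surjective onto $ \OKp $. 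Alternatively, one can sidestep this by checking directly, componentwise over each $ \P \mid \p $, that $ \OLP $ is free over $ \AHp \otimes \OLP $'s relevant factor using Theorem \ref{theorem_unramified_local} itself, and then assembling a global generator—but routing through $ H $-tameness is cleaner and is the approach I would present.
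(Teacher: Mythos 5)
Your proof is correct, and its decisive final step coincides with the paper's: show $\OLp[N]^{G}$ is a Hopf order (Proposition \ref{PT_Hopf_Order_Global}), verify that $\OLp$ is a tame extension over it, and apply the generalised Childs theorem. Where you diverge is in how the identification $\AHp = \OLp[N]^{G}$ is obtained. You prove it up front by rerunning Proposition \ref{PT_Unramified_Associated} over $\OKp$, which requires the splitting $\OLp \otimes_{\OKp} \OLp \cong \OLp^{[L:K]}$ and a completed version of Byott's lemma (your flat-base-change justification of the latter is fine, since taking $G$-invariants commutes with the flat extension $\OK \to \OKp$). The paper skips all of this: it proves freeness of $\OLp$ over $\OLp[N]^{G}$ first and then deduces $\AHp = \OLp[N]^{G}$ as a one-line corollary, using the fact (noted in the introduction) that the associated order is the only order over which $\OLp$ can be free and that $\OLp[N]^{G} \subseteq \AHp$ already holds by Proposition \ref{Fixed_Points_Subset_A}. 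So your extra tensor computation is sound but redundant. One further remark: you flag condition (iv) of tameness as ``the one requiring real work,'' but it is immediate. The element $\theta = \sum_{n \in N} n$ (unnormalised --- the normalised sum need not lie in the order when $p \mid |N|$, though that is irrelevant here) is a left integral, and by equation (\ref{GP_Action_Eqn}) and regularity of $N$ it acts on $L_{\p}$ as the trace $\mathrm{Tr}_{L_{\p}/K_{\p}}$; since $\p$ is unramified in $\OL$ the trace maps $\OLp$ onto $\OKp$, which is exactly condition (iv). This one observation is the entire content of the paper's tameness verification.
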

\begin{proof}
By Proposition \ref{PT_Hopf_Order_Global}, $ \OLp[N]^{G} $ is a Hopf order in $ H_{\p} $. We note that the trace element
\[ \theta= \sum_{n \in N} n \]
is a left integral of $ \OLp[N]^{G} $, and since $ \p $ is unramified in $ \OL $ there exists an element $ t \in \OLp $ such that $ \theta \cdot t = 1 $. Thus $ \OLp $ is an $ \OLp[N]^{G} $-tame extension of $ \OKp $, and so $ \OLp $ is a free $ \OLp[N]^{G} $-module. Thus $ \AHp = \OLp[N]^{G} $. 
\end{proof}

\begin{cor} \label{cor_unramified}
Under the same assumptions as Theorem \ref{theorem_unramified_global}, $ \OLp $ is free over $ \AHp $ for all primes $ \p $ of $ \OK $ which are unramified in $ \OL $. Thus in order to determine whether $ \OL $ is a locally free $ \AH $-module, it is sufficient to consider the structure of $ \OLp $ over $ \AHp $ for each of the (finitely many) primes $ \p $ which are ramified in $ \OL $. 
\end{cor}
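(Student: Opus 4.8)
The plan is to combine Theorem~\ref{theorem_unramified_global} with the observation that the only primes requiring separate treatment are the (finitely many) ramified ones, and then to recall the standard fact that local freeness of $\OL$ over $\AH$ is detected primewise. First I would invoke Theorem~\ref{theorem_unramified_global}: since $L/K$ is abelian and $H$ is produced by Theorem~\ref{GP_Theory}, so $H = L[N]^{G}$ for a regular $N$ normalised by $\lambda(G)$, that theorem gives for every prime $\p$ of $\OK$ which is unramified in $\OL$ that $\AHp = \OLp[N]^{G}$ and that $\OLp$ is a free $\AHp$-module. This establishes the freeness at all but finitely many primes, since only finitely many primes of $\OK$ ramify in $\OL$.

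Next I would note that the set of primes of $\OK$ dividing $[L:K]$ is finite, so among the ramified primes there are at most finitely many to worry about, and the remaining ramified primes (those not dividing $[L:K]$) are exactly the tame ones. The key point is that the decomposition $L_{\p} \cong \prod_{\P \mid \p} L_{\P}$ and its integral analogue mean that $\OLp$ is a module over $\AHp = H_{\p} \cap (\prod_{\P} \OLP)$, and that $\OL$ is locally free over $\AH$ precisely when $\OLp$ is a free $\AHp$-module for every prime $\p$ of $\OK$; this is simply the definition of locally free recalled in the introduction. So the corollary reduces to checking freeness at each of the finitely many ramified primes.

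The main obstacle — and the reason this is only a corollary, not a full theorem — is that the corollary as stated does \emph{not} claim local freeness at the ramified primes; it merely records that the problem has been reduced to those primes. So there is in fact nothing further to prove beyond assembling Theorem~\ref{theorem_unramified_global} and the finiteness of the ramified locus: I would state that by Theorem~\ref{theorem_unramified_global} the module $\OLp$ is free over $\AHp$ for every unramified $\p$, that only finitely many $\p$ ramify in $\OL$, and conclude that to decide whether $\OL$ is locally free over $\AH$ it remains only to analyse $\OLp$ over $\AHp$ at those finitely many ramified primes. (The genuinely substantive input at the ramified primes, namely the case $p \nmid [L:K]$ handled by the global analogue of Theorem~\ref{theorem_maximal_local}, is what will later be combined with this corollary to prove Theorem~\ref{intro_cor_domestic}.)
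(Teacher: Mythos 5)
Your proposal is correct and matches the paper's reasoning: the corollary carries no separate proof in the paper, being an immediate consequence of Theorem~\ref{theorem_unramified_global} together with the finiteness of the set of ramified primes and the definition of local freeness, exactly as you assemble it. Your asides about tame ramified primes and the explicit description of $ \AHp $ are unnecessary for the statement but do no harm.
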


\noindent Now we consider the situation analogous to that considered in section 4. 

\begin{prop} \label{global_fixed_points_maximal}
Let $ L/K $ be a finite (not necessarily Galois) extension of number fields with Galois closure $ E $. Suppose $ L/K $ is $ H $-Galois for some commutative Hopf algebra $ H = E[N]^{G} $. Let $ \p $ be a prime of $ \OK $ which lies above a prime number $ p \nmid [L:K] $. Then $ \OEp[N]^{G} $ is the unique maximal order in $ H_{\p} $. 
\end{prop}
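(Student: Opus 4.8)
The plan is to mimic the proof of Proposition~\ref{Fixed_Points_Maximal}, which handles the purely local case, and simply check that the argument survives base change to $K_{\p}$. First I would note that $H_{\p} = E[N]^{G} \otimes_{K} K_{\p}$. The subtlety is that $E_{\p} = E \otimes_{K} K_{\p}$ is in general only a product of local fields rather than a single local field, but this does not affect the key points: $E_{\p}[N] = E[N] \otimes_{K} K_{\p}$ is still a separable commutative $K_{\p}$-algebra (being a base change of a separable commutative algebra), hence has a unique maximal $\OEp$-order, and since $p \nmid |N| = [L:K]$ we have $|N| \in \OEp^{\times}$, so by \cite[Proposition~27.1]{CurRei} applied componentwise this maximal order is $\OEp[N]$, exactly as in Proposition~\ref{Local_Group_Ring_Maximal_Order}.

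Next, since $H_{\p}$ is a commutative separable $K_{\p}$-algebra it has a unique maximal order $\M$. I would then run the integrality argument of Proposition~\ref{Fixed_Points_Maximal} verbatim: any $x \in \M$ is integral over $\OKp$ inside $H_{\p}$, hence integral over $\OEp$ inside $E_{\p}[N]$, hence lies in the maximal $\OEp$-order $\OEp[N]$; therefore $x \in E_{\p}[N]^{G} \cap \OEp[N] = \OEp[N]^{G}$, giving $\M \subseteq \OEp[N]^{G}$. Combined with the fact that $\OEp[N]^{G}$ is an $\OKp$-order in $H_{\p}$ (it is finitely generated over $\OKp$ and spans $H_{\p}$ over $K_{\p}$ — this follows from $\OE[N]^{G}$ being an $\OK$-order in $H$ and flatness of completion), maximality of $\M$ forces $\M = \OEp[N]^{G}$.

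One point I would want to state carefully is why $E_{\p}[N]^{G}$ equals $H_{\p}$, i.e. why taking $G$-fixed points commutes with the base change $- \otimes_{K} K_{\p}$; this holds because $K_{\p}$ is flat over $K$ and $G$ is finite, so $(E[N] \otimes_{K} K_{\p})^{G} = E[N]^{G} \otimes_{K} K_{\p}$, and similarly at integral level $(\OE[N] \otimes_{\OK} \OKp)^{G} = \OE[N]^{G} \otimes_{\OK} \OKp = \OEp[N]^{G}$ since $\OKp$ is flat over $\OK$. I expect the main obstacle to be purely bookkeeping: making sure every statement about maximal orders and integral closures that was used over the local field $E$ in Section~4 still makes sense when $E$ is replaced by the étale $K_{\p}$-algebra $E_{\p}$, which amounts to checking everything componentwise over the local factors $E_{\P}$ of $E_{\p}$. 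No genuinely new idea should be required beyond the observation that $p \nmid [L:K]$ is exactly what makes $|N|$ a unit in each $\OEP$.
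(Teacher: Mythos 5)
Your proposal is correct and follows essentially the same route as the paper's own proof: reduce to the product decomposition $E_{\p}[N] \cong \prod_{\P \mid \p} E_{\P}[N]$, apply the local maximality of $\OEP[N]$ (Proposition \ref{Local_Group_Ring_Maximal_Order}) componentwise, and then intersect with $E_{\p}[N]^{G}$ to identify the maximal order of $H_{\p}$ with $\OEp[N]^{G}$. Your explicit treatment of the integrality step and of the commutation of $G$-fixed points with completion fills in details the paper leaves implicit, but introduces no new idea.
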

\begin{proof}
Let $ \M $ denote the unique maximal order in $ H $, so that $ \Mp $ is the unique maximal order in $ H_{\p} $, and let $ x\in \Mp $. Then $ x \in E_{\p}[N]^{G} $, so $ x \in E_{\p}[N] $. We have an isomorphism 
\[ E_{\p}[N] \cong \prod_{\P \mid \p} E_{\P}[N], \]
where the product is taken over the prime ideals $ \P $ of $ \OL $ lying abve $ \p $, and each factor on the right is a group algebra over a $ p $-adic field whose residue characteristic is coprime to  $ |N| $. Applying Proposition \ref{Local_Group_Ring_Maximal_Order} to each factor on the right, we see that the image of $ x $ under the isomorphism above lies in the product
\[ \prod_{\P \mid \p} \OEP[N] \cong \OEp[N], \]
and so $ x \in \OEp[N] \cap E_{\p}[N]^{G} = \OEp[N]^{G} $. Therefore $ \Mp = \OEp[N]^{G}  $. 
\end{proof}

\begin{prop} \label{AH_Maximal_global}
Retain the assumptions of Proposition \ref{global_fixed_points_maximal}. Then the completed associated order $ \AHp $ is the unique maximal order in $ H_{\p} $. 
\end{prop}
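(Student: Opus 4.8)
The plan is to argue exactly as in Proposition \ref{AH_Maximal}, transplanting that local argument to the completion $H_{\p}$. There are only two ingredients: the maximality of $\OEp[N]^{G}$ in $H_{\p}$, which is precisely the content of Proposition \ref{global_fixed_points_maximal}, and the inclusion $\OEp[N]^{G} \subseteq \AHp$. Once both are available, the conclusion is immediate: an $\OKp$-order in $H_{\p}$ cannot properly contain the unique maximal order, so $\AHp = \OEp[N]^{G}$, and this is that maximal order.

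The only point that needs a word of justification is the inclusion $\OEp[N]^{G} \subseteq \AHp$, and I would obtain it by completing the inclusion $\OE[N]^{G} \subseteq \AH$ furnished by Proposition \ref{Fixed_Points_Subset_A} (applied to the number field extension $L/K$). Applying $- \otimes_{\OK} \OKp$ preserves inclusions, so the only thing to verify is that $(\OE[N]^{G}) \otimes_{\OK} \OKp$ really is $\OEp[N]^{G}$. This is a routine flat base change: $\OKp$ is flat over $\OK$, the group $G$ is finite, and the functor of $G$-invariants is a finite limit (the kernel of a map into a finite product), hence commutes with the exact functor $- \otimes_{\OK} \OKp$; together with $\OE[N] \otimes_{\OK} \OKp = \OEp[N]$ this gives the identification. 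Alternatively one can simply re-run the short computation in the proof of Proposition \ref{Fixed_Points_Subset_A} after completing, using the decompositions $L_{\p} \cong \prod_{\P \mid \p} L_{\P}$ and $E_{\p} \cong \prod E_{\P'}$ together with the Greither--Pareigis action formula (\ref{GP_Action_Eqn}), since every $n^{-1}(\overline{1_{G}})$ is still represented by a Galois automorphism carrying $\OLp$ into $\OEp$.

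I do not anticipate a genuine obstacle: this is the global counterpart of Proposition \ref{AH_Maximal}, and all of the substantive work has already been carried out in Proposition \ref{global_fixed_points_maximal}. The only mild care points are the flat base change identification above and, if one prefers to phrase the conclusion in terms of the associated order of $\OLp$ in $H_{\p}$ rather than the base change $\AH \otimes_{\OK} \OKp$, recalling that associated orders localise well so that the two coincide; for the statement as written, neither of these causes any difficulty, and the proof is a two line deduction from the cited propositions.
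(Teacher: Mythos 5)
Your proof is correct and follows the paper's own argument exactly: maximality of $\OEp[N]^{G}$ from Proposition \ref{global_fixed_points_maximal}, the inclusion $\OEp[N]^{G} \subseteq \AHp$ from (the completion of) Proposition \ref{Fixed_Points_Subset_A}, and the observation that an order cannot properly contain the unique maximal order. The flat base change justification you supply is a detail the paper leaves implicit, but it is the right one.
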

\begin{proof}
By Proposition \ref{global_fixed_points_maximal}, $ \OEp[N]^{G} $ is the unique maximal order in $ H_{\p} $. On the other hand, by Proposition \ref{Fixed_Points_Subset_A} $ \OEp[N]^{G} \subseteq \AHp $. So $ \OEp[N]^{G}= \AHp $ and this is the maximal order in $ H_{\p} $. 
\end{proof}

\begin{thm} \label{theorem_maximal_global}
Let $ L/K $ be a finite (not necessarily Galois) extension of number fields with Galois closure $ E $, and let $ G = \Gal{E/K} $. Suppose that $ L/K $ is $ H $-Galois for some commutative Hopf algebra $ H = E[N]^{G} $. Suppose that $ \p $ is a prime of $ \OK $ lying above a prime number $ p \nmid [L:K] $. Then $ \AHp = \OEp[N]^{G} $ and $ \OLp $ is a free $ \AHp $-module. 
\end{thm}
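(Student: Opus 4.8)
The plan is to assemble the two preceding propositions in exactly the way the local statement (Theorem \ref{theorem_maximal_local}) was obtained from its local ingredients. First I would observe that the hypotheses of the present theorem are precisely those of Proposition \ref{AH_Maximal_global}: $L/K$ is $H$-Galois for a commutative Hopf algebra $H = E[N]^{G}$, and $\p$ is a prime of $\OK$ lying above a rational prime $p$ with $p \nmid [L:K]$. Applying that proposition gives at once both the explicit identification $\AHp = \OEp[N]^{G}$ and the fact that this order is the \emph{unique maximal} order in the separable commutative $K_{\p}$-algebra $H_{\p}$.

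With that in hand, I would simply invoke Proposition \ref{maximal_order_free_global}: since $L/K$ is $H$-Galois for the commutative Hopf algebra $H$ and $\AHp$ is the unique maximal order in $H_{\p}$, the proposition yields that $\OLp$ is a free $\AHp$-module, completing the proof. There is no real obstacle here; the content of the argument has been pushed into the two cited propositions. The only point that genuinely requires care — and which is already handled inside Proposition \ref{global_fixed_points_maximal} — is that $L_{\p}$, and hence $E_{\p}$, is in general a product of $p$-adic fields rather than a single local field, so that maximality of $\OEp[N]^{G}$ must be checked componentwise via the decomposition $E_{\p}[N] \cong \prod_{\P \mid \p} E_{\P}[N]$ and Proposition \ref{Local_Group_Ring_Maximal_Order}. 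The hypothesis $p \nmid [L:K]$ enters exactly at this stage: since $|N| = [L:K]$, it guarantees that $|N|$ is a unit in each $\OEP$, which is what makes each integral group ring $\OEP[N]$ the maximal order in $E_{\P}[N]$.
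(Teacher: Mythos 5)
Your proof is correct and follows essentially the same route as the paper: apply Proposition \ref{AH_Maximal_global} (which rests on Proposition \ref{global_fixed_points_maximal}) to identify $\AHp = \OEp[N]^{G}$ as the unique maximal order in $H_{\p}$, then invoke Proposition \ref{maximal_order_free_global} to conclude freeness. Your added remarks about the componentwise decomposition and the role of $p \nmid [L:K]$ accurately reflect where the hypothesis is used inside the cited propositions.
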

\begin{proof}
By Proposition \ref{global_fixed_points_maximal} and Proposition \ref{AH_Maximal_global} we have that $ \AHp = \OLp[N]^{G} $ and that this is the unique maximal order in $ H_{\p} $. Now apply Proposition \ref{maximal_order_free_global}. 
\end{proof}

\noindent We obtain Theorem \ref{intro_cor_domestic} by combining these results. Recall that a Galois extension $ L/K $ of number fields is called {\em domestic} if no prime of $ \OK $ lying above a prime number dividing $ [L:K] $ ramifies in $ \OL $. 

\begin{thm} \label{cor_domestic}
Let $ L/K $ be a  finite domestic abelian Galois extension of number fields. Suppose that $ L/K $ is $ H $-Galois for some commutative Hopf algebra $ H $. Then $ \AH = \OL[N]^{G} $ and $ \OL $ is a locally free $ \AH $-module.
\end{thm}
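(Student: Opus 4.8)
The plan is to verify the stated description of $\AH$ and the local freeness of $\OL$ over $\AH$ one prime of $\OK$ at a time, using the domestic hypothesis to decide which of the two local theorems of this section applies. First I would record that, since $L/K$ is Galois, its Galois closure is $L$ itself, so in the description supplied by Theorem \ref{GP_Theory} we have $G = \Gal{L/K}$, the coset space $X$ is $G$, and $H = L[N]^{G}$ for some regular subgroup $N$ of $\perm{G}$ normalised by $\lambda(G)$; because $H$ is commutative, $N$ is abelian, and $|N| = [L:K]$. Thus both $\OL[N]^{G}$ and $\AH$ are $\OK$-orders (in particular finitely generated $\OK$-lattices spanning $H$ over $K$) in $H$, and by Proposition \ref{Fixed_Points_Subset_A} we already have $\OL[N]^{G} \subseteq \AH$.

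Next I would split the primes $\p$ of $\OK$ into two classes. If $\p$ is unramified in $\OL$, then Theorem \ref{theorem_unramified_global} applies directly and gives $\AHp = \OLp[N]^{G}$, with $\OLp$ free over $\AHp$. If instead $\p$ ramifies in $\OL$, then the domestic hypothesis forces $\p$ to lie above a prime number $p$ with $p \nmid [L:K]$; since the Galois closure $E$ equals $L$, Theorem \ref{theorem_maximal_global} applies and again gives $\AHp = \OLp[N]^{G}$ (the unique maximal order in $H_{\p}$), with $\OLp$ free over $\AHp$. Every prime of $\OK$ falls into exactly one of these two classes, so at every prime $\p$ we have $\AHp = \OLp[N]^{G}$ and $\OLp$ free over $\AHp$; the latter statements, taken together, are precisely the assertion that $\OL$ is locally free over $\AH$.

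It then remains to deduce the global equality $\AH = \OL[N]^{G}$ from the local ones. Here I would use that taking $G$-invariants commutes with the flat base change $(-)\otimes_{\OK}\OKp$ (the invariants are the kernel of an $\OK$-linear map and $\OKp$ is flat over $\OK$), so that $\left(\OL[N]^{G}\right)\otimes_{\OK}\OKp = \OLp[N]^{G} = \AHp = \AH\otimes_{\OK}\OKp$ for every $\p$. Since $\OL[N]^{G} \subseteq \AH$ is an inclusion of $\OK$-lattices in $H$ whose completions at every prime coincide, equality holds locally everywhere, hence globally: $\AH = \OL[N]^{G}$.

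The substance of the theorem is carried by the two local theorems already established; the only points requiring care in the assembly are checking that the domestic hypothesis exactly partitions the primes of $\OK$ into the unramified ones (handled by Theorem \ref{theorem_unramified_global}) and those lying above rational primes not dividing $[L:K]$ (handled by Theorem \ref{theorem_maximal_global}), and the mild commutative-algebra fact that $G$-invariants commute with localisation, which is what allows the local computations of $\AHp$ to be patched into the single global statement $\AH = \OL[N]^{G}$.
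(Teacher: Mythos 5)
Your proposal is correct and follows essentially the same route as the paper: split the primes of $\OK$ into those unramified in $\OL$ (handled by Theorem \ref{theorem_unramified_global}) and the ramified ones, which by the domestic hypothesis lie over rational primes not dividing $[L:K]$ (handled by Theorem \ref{theorem_maximal_global}). Your explicit patching argument for the global equality $\AH = \OL[N]^{G}$ --- invariants commuting with the flat base change $\OK \to \OKp$, plus the fact that two $\OK$-lattices agreeing at every completion coincide --- is a detail the paper leaves implicit, and is a worthwhile addition.
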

\begin{proof}
By Theorem \ref{theorem_unramified_global}, we have that if $ \p $ is a prime of $ \OK $ which is unramified in $ \OL $ then $ \AHp = \OLp[N]^{G} $ and $ \OLp $ is a free $ \AHp $-module. Suppose $ \p $ is a prime of $ \OK $ which is ramified in $ \OL $. Then $ \p $ lies above a prime number $ p $, and since $ L/K $ is domestic, we have $ p \nmid [L:K] $. We may therefore apply Theorem \ref{theorem_maximal_global}, and conclude that $ \AHp = \OLp[N]^{G} $ and that $ \OLp $ is a free $ \AHp $-module.
\end{proof}

\noindent As a particular example, we have:

\begin{cor} \label{prime_power_number_field_free}
Let $ L/K $ be an abelian Galois extension of number fields of prime power degree which is at most tamely ramified. Suppose that $ L/K $ is $ H $-Galois for some commutative Hopf algebra $ H $. Then $ \OL $ is a locally free $ \AH $-module.
\end{cor}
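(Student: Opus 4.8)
The plan is to deduce this corollary directly from Theorem \ref{cor_domestic} by showing that the hypotheses here force $ L/K $ to be domestic. Since $ L/K $ is an abelian Galois extension of prime power degree, say $ [L:K] = \ell^{k} $ for some prime number $ \ell $, the only prime number dividing $ [L:K] $ is $ \ell $ itself. Thus the domesticity condition reduces to the single requirement that no prime of $ \OK $ lying above $ \ell $ ramifies in $ \OL $.

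First I would invoke the hypothesis that $ L/K $ is at most tamely ramified. By the standard theory of ramification in extensions of number fields, a prime $ \P $ of $ \OL $ lying above a prime $ \p $ of $ \OK $ (which in turn lies above the rational prime $ p $) is tamely ramified precisely when $ p $ does not divide its ramification index $ e(\P \mid \p) $. Now if $ \p $ lies above $ \ell $ and $ \p $ ramifies in $ \OL $, then $ e(\P \mid \p) > 1 $; but $ e(\P \mid \p) $ divides $ [L:K] = \ell^{k} $, so $ e(\P \mid \p) $ is a power of $ \ell $ greater than $ 1 $, hence divisible by $ \ell $. This contradicts tameness. Therefore no prime above $ \ell $ ramifies in $ \OL $, which is exactly the statement that $ L/K $ is domestic.

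Having established that $ L/K $ is a finite domestic abelian Galois extension of number fields which is $ H $-Galois for a commutative Hopf algebra $ H $, I would then apply Theorem \ref{cor_domestic} verbatim to conclude that $ \AH = \OL[N]^{G} $ and that $ \OL $ is a locally free $ \AH $-module. This gives the claimed result.

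I do not expect any genuine obstacle here: the entire content is the elementary observation that, in prime power degree, tame ramification at the unique relevant prime is equivalent to no ramification at that prime, so ``tamely ramified'' upgrades to ``domestic'' automatically. The only point requiring a small amount of care is to phrase the ramification-index argument correctly for primes of $ \OK $ that may themselves split, ramify, or remain inert over $ \Q $ — but since we only need $ e(\P \mid \p) \mid [L:K] $ and the tameness criterion relative to the residue characteristic $ p = \ell $, this is routine.
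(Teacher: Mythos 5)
Your proposal is correct and follows exactly the paper's argument: the paper likewise reduces to Theorem \ref{cor_domestic} by observing that in prime power degree, tame ramification is equivalent to domesticity. You simply spell out the ramification-index argument that the paper leaves implicit.
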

\begin{proof}
By Corollary \ref{cor_domestic}, it is sufficient to observe that since $ L/K $ has prime power degree, the assumption that it is tamely ramified is equivalent to the assumption that it is domestic.
\end{proof}

\bibliography{refs}
\bibliographystyle{alpha}

\end{document}